\definecolor{webgreen}{rgb}{0,.5,0}
\definecolor{webbrown}{rgb}{.6,0,0}
\numberwithin{equation}{section}
\DeclareMathOperator{\Li}{Li}
\DeclareMathOperator{\Cl}{Cl}
\DeclareMathOperator{\Ein}{Ein}
\begin{document}

\theoremstyle{plain}
\newtheorem{theorem}{Theorem}
\newtheorem{corollary}[theorem]{Corollary}
\newtheorem{lemma}[theorem]{Lemma}
\newtheorem{remark}{Remark}
\newtheorem{example}{Example}

\newcommand{\lrf}[1]{\left\lfloor #1\right\rfloor}

\begin{center}
\vskip 1cm{\LARGE\bf
On a Problem of Mez\H{o} and its Generalizations to Three Classes of Rational Zeta Series \\
\vskip .11in }

\vskip 1cm

{\large

\vskip 0.2 in

Kunle Adegoke \\
Department of Physics and Engineering Physics, \\ Obafemi Awolowo University, Ile-Ife\\ Nigeria \\
\href{mailto:adegoke00@gmail.com}{\tt adegoke00@gmail.com}

\vskip 0.2 in

Robert Frontczak\footnote{Statements and conclusions made in this article by R.~Frontczak are entirely those of the author.
They do not necessarily reflect the views of LBBW.} \\
Landesbank Baden-W\"urttemberg, Stuttgart\\  Germany \\
\href{mailto:robert.frontczak@lbbw.de}{\tt robert.frontczak@lbbw.de}

\vskip 0.2 in

Taras Goy  \\
Faculty of Mathematics and Computer Science\\
Vasyl Stefanyk Precarpathian National University, Ivano-Frankivsk\\ Ukraine\\
\href{mailto:taras.goy@pnu.edu.ua}{\tt taras.goy@pnu.edu.ua}}

\end{center}

\vskip .2 in

\begin{abstract}
We evaluate in closed form three special classes of alternating zeta series with one and two additional parameters. Two classes are expressed as linear combinations of polylogarithms while for the third class we prove an expression involving the incomplete gamma function and the exponential integral. We also present some related series that can be deduced from the main results as well as some series with Fibonacci and Lucas numbers as coefficients. Particular cases of the series presented here will be rediscoveries of identities established by Zhang and Williams, Choi and Srivastava, and Orr, among others. We will also rediscover a series identity published by Mez\H{o} in 2015 as a problem proposal in the American Mathematical Monthly.
\vskip 4 pt
\noindent\textit{2020 Mathematics Subject Classification}: 41A58, 11M99, 11B39, 33B15.
\vskip 4 pt
\noindent\textit{Keywords}: Riemann zeta function; rational zeta series; polylogarithm; Clausen function;  Fibonacci numbers; Lucas numbers; Bernoulli numbers. 
\end{abstract}

\section{Motivation}

This paper has two sources of motivation. The first source is a problem proposal by Mez\H{o} from 2015 that appeared in the American Mathematical Monthly \cite{Mezo1}. It asks to prove the identity
\begin{equation}\label{mezo_id}
\frac{1}{2\pi} \Li_2(e^{-2\pi}) = \ln(2\pi) - 1 - \frac{5\pi}{12} - \sum_{k=1}^\infty \frac{(-1)^k \zeta(2k)}{k(2k+1)},
\end{equation}
where
\begin{equation*}
\zeta (s) = \sum_{k=1}^{\infty} \frac{1}{k^{s}}, \qquad \Re(s) >1,
\end{equation*}
is the Riemann zeta function and
\begin{equation*}
\Li_2(z) = \sum_{k=1}^\infty \frac{z^k}{k^2}, \qquad |z|<1,
\end{equation*}
is the dilogarithm. During the course of solving this problem, we found an interesting generalization for which we could provide
two different proofs. Searching deeper in this direction we got familiar with the recent papers by Orr \cite{lupu,orr}
which became the second source of motivation for writing this paper.
Orr has derived beautiful results for two families of rational zeta series which he expressed using
the Clausen functions $\Cl_n(x)$. One such evaluation involving $\zeta(2n)$ is \cite[Eq. (2.4)]{orr}
\begin{equation*}
-2 \sum_{n=0}^\infty \frac{\zeta(2n) z^{2n}}{2n+p} = \sum_{k=0}^p \frac{p! (-1)^{\lfloor (k+3)/2 \rfloor}}{(p-k)! (2\pi z)^k} \Cl_{k+1} (2\pi z)
+ \delta_{\lfloor p/2 \rfloor,p/2} \frac{p! (-1)^{p/2}}{(2\pi z)^p} \zeta(p+1),
\end{equation*}
where $\delta_{j,k}$ is the Kronecker delta function. Orr \cite[Eq. (3.5)]{orr} also evaluates series of the form
\begin{equation*}
\sum_{n=1}^\infty \frac{\zeta(2n) z^{2n}}{(2n)(2n+1)\cdots (2n+m-1)(2n+m+p)}.
\end{equation*}
Here, $\Cl_n (x)$ are the Clausen functions defined by
\begin{equation*}
\Cl_1(x) = - \ln\left (2 \sin\left (\frac{x}{2}\right )\right),\quad |x|<2\pi,
\end{equation*}
and, for $n\geq2$, by \cite[Formulas (7.9) and (7.10)]{lewin81}
\begin{equation}\label{Cl_def}
\Cl_n(x) = \begin{cases}\displaystyle
\Im\big(\Li_n(e^{ix})\big)=\sum_{k=1}^{\infty}\frac{\sin(kx)}{k^n}, & \text{if $n$ is even;} \\\displaystyle
\Re\big(\Li_n(e^{ix})\big)=\sum_{k=1}^{\infty}\frac{\cos(kx)}{k^n}, & \text{if $n$ is odd.}
\end{cases}
\end{equation}
See also \cite{tric} for new information about the Clausen functions. \\

In this article, for a positive integer $n$, we first consider series of the form
\begin{equation*}
P(n,z) = \sum_{k = 1}^\infty \frac{(- 1)^k \zeta (2k) z^{2k}}{k(2k + n)}, \qquad 0<|z|\leq 1.
\end{equation*}
In addition, for positive integers $m$ and $n$ with $m\neq n$, we also treat the class of rational series
\begin{equation*}
P(m,n,z) = \sum_{k = 1}^\infty \frac{(- 1)^k \zeta (2k) z^{2k}}{k(2k+m)(2k + n)}, \qquad 0<|z|\leq 1,
\end{equation*}
and its degenerated counterpart for $m\geq1$
\begin{equation*}
Q(m,z) = \sum_{k = 1}^\infty \frac{(- 1)^k \zeta (2k) z^{2k}}{k(2k+m)^2}, \qquad 0<z\leq 1.
\end{equation*}

We express $P(n,z)$ and $P(m,n,z)$ as linear combinations of polylogarithms $\Li_s (x)$.
For $Q(m,z)$ we prove an identity involving the incomplete gamma function $\Gamma(a,x)$ and the exponential integral $\Ein(x)$. We also present some related series that can be deduced from the main results as well as some series with Fibonacci and Lucas numbers as coefficients.
The paper concludes with a discussion of further possible generalizations of the present results to series with an arbitrary power $k^p,p\geq 1,$ in the denominator of $P(n,z),P(m,n,z)$ and $Q(m,z)$, respectively.

\section{A generalization of the zeta identity of Mez\H{o}}

First, we prove the following generalization of \eqref{mezo_id}, for which we offer two proofs.
\begin{theorem}\label{thm1}
	Let $n$ be a positive integer. For all $z$ such that $0<|z|\le 1$, we have the identity
	\begin{align}\label{main_id1}
	P(n,z)&=\sum_{k = 1}^\infty  \frac{{( - 1)^{k} \zeta (2k)z^{2k} }}{{k(2k + n)}}\nonumber\\
	&= -\frac{1}{n^2} - \frac{\pi z}{n + 1} + \frac{\ln (2\pi z)}{n} + \frac{(n - 1)!}{(2\pi z)^n}{\zeta (n + 1)}
	- (n - 1)!\sum_{j = 1}^{n} \frac{\Li_{ j+1}( e^{- 2\pi z})}{(n-j)!(2\pi z)^j},
	\end{align}
	where $\displaystyle \Li_s(z)=\sum_{k=1}^\infty \frac{z^k}{k^s}$ is the polylogarithm of order $s$ evaluated at $z$.
\end{theorem}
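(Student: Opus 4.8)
The plan is to reduce the general index $n$ to the single base series $g(z):=\sum_{k\ge 1}(-1)^k\zeta(2k)z^{2k}/k$, evaluate $g$ in closed form, and then recover $P(n,z)$ by one integration.

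First I would establish the closed form of $g$. Starting from the classical expansion $\pi x\cot(\pi x)=1-2\sum_{k\ge 1}\zeta(2k)x^{2k}$, valid for $|x|<1$, the substitution $x=iz$ together with $\cot(i\theta)=-i\coth\theta$ gives $\sum_{k\ge 1}(-1)^k\zeta(2k)z^{2k}=\tfrac12\bigl(1-\pi z\coth(\pi z)\bigr)$. Differentiating $g$ term by term then yields $g'(z)=\tfrac1z-\pi\coth(\pi z)$; integrating and fixing the constant of integration by $g(0^+)=0$ gives $g(z)=\ln\bigl(\pi z/\sinh(\pi z)\bigr)$. Writing $\sinh(\pi z)=\tfrac12 e^{\pi z}\bigl(1-e^{-2\pi z}\bigr)$ and using $\Li_1(w)=-\ln(1-w)$, this becomes $g(z)=\ln(2\pi z)-\pi z+\Li_1(e^{-2\pi z})$, which is precisely the $n\to 0$ shadow of \eqref{main_id1} and already exhibits the right-hand side of Mez\H{o}'s identity \eqref{mezo_id} at $z=1$.

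Next I would pass from $g$ to $P(n,z)$. Both $P(n,z)$ and $z^{-n}\int_0^z t^{n-1}g(t)\,dt$ are represented by absolutely convergent power series on $0<|z|<1$; multiplying $P(n,z)$ by $z^n$, differentiating, and comparing term by term via $\frac{d}{dz}\frac{z^{2k+n}}{2k+n}=z^{2k+n-1}$ shows the two functions have the same derivative and both vanish at $0$, hence $P(n,z)=z^{-n}\int_0^z t^{n-1}g(t)\,dt$. Now split $g(t)=\ln(2\pi t)-\pi t-\ln(1-e^{-2\pi t})$. The first two contributions are elementary: $\int_0^z t^{n-1}\ln(2\pi t)\,dt=\frac{z^n}{n}\ln(2\pi z)-\frac{z^n}{n^2}$ (integration by parts) and $\int_0^z t^{n-1}(-\pi t)\,dt=-\frac{\pi z^{n+1}}{n+1}$. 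For the third, expand $-\ln(1-e^{-2\pi t})=\sum_{m\ge 1}e^{-2\pi m t}/m$, interchange summation and integration, and insert the finite-sum (incomplete gamma) evaluation $\int_0^z t^{n-1}e^{-at}\,dt=\frac{(n-1)!}{a^n}-(n-1)!\,e^{-az}\sum_{j=0}^{n-1}\frac{z^j}{j!\,a^{n-j}}$ with $a=2\pi m$. Resumming over $m$ turns $\sum_m m^{-(n+1)}$ into $\zeta(n+1)$ and $\sum_m e^{-2\pi m z}m^{-(n+1-j)}$ into $\Li_{n+1-j}(e^{-2\pi z})$; reindexing $j\mapsto n-j$, collecting the power $z^j/(2\pi)^{n-j}=z^n/(2\pi z)^{n-j}$, and dividing through by $z^n$ produces exactly \eqref{main_id1}. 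Note that because the incomplete-gamma sum runs only up to $j=n-1$, the polylogarithms that appear all have order $\ge 2$, so no $\Li_1$ term survives and no cancellation is needed.

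The genuine difficulty is not in any individual step but in the bookkeeping: getting the base case exactly right (the $\ln 2$ concealed in $\sinh$, the passage from $\cot$ to $\coth$, the integration constant), and then tracking the incomplete-gamma sum carefully enough that polylogarithm orders and powers of $2\pi z$ align after the reindexing. On the analytic side, every interchange (term-by-term differentiation and integration, $\sum_m\int=\int\sum_m$) is justified by absolute and locally uniform convergence, so I would first run the whole argument for real $z\in(0,1)$, then pass to $z=1$ by Abel's theorem, noting that the left-hand side of \eqref{main_id1} converges absolutely at $z=1$ and the right-hand side is continuous there since $\Li_{j+1}(1)=\zeta(j+1)$ is finite for $j\ge 1$; the remaining values of $z$ in the stated range follow by analytic continuation. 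A second, more computational proof — presumably the paper's alternative — would instead differentiate the claimed identity \eqref{main_id1} in $z$ and check that both sides satisfy the same first-order differential relation with matching initial data, reducing the theorem to a routine verification once the base case $g$ is known.
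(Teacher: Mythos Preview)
Your proof is correct and follows essentially the same route as the paper's first proof: evaluate the base series $g(z)=S_1(z)$ in closed form, write $P(n,z)$ as an integral of it (you use $z^{-n}\int_0^z t^{n-1}g(t)\,dt$, the paper uses the equivalent $\int_0^1 y^{n-1}S_1(zy)\,dy$ via $t=zy$), and handle the $-\ln(1-e^{-2\pi t})$ piece by expanding and applying the incomplete-gamma integral. Your guess about the paper's alternative proof is off, however: rather than differentiating the claimed identity, the paper's second proof uses the partial-fraction split $\tfrac{1}{k(2k+n)}=\tfrac{1}{n}\bigl(\tfrac{1}{k}-\tfrac{2}{2k+n}\bigr)$ and evaluates $\sum_k(-1)^k\zeta(2k)z^{2k}/(2k+n)$ separately through the $\coth$ expansion.
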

\noindent{\it First proof.} Let
\begin{equation}\label{sum_s1}
S_1(z) = \sum_{k=1}^\infty \frac{(-1)^k \zeta(2k)}{k} z^{2k}.
\end{equation}
For a variable $y$, we have
\[
y^{n - 1} S_1 (zy ) = \sum_{k = 1}^\infty \frac{{( - 1)^k \zeta (2k) }}{k}z^{2k} y^{2k + n - 1}
\]
and hence
\begin{equation}\label{eq.u4gd1cu}
P(n,z) = \int_0^1 {y^{n - 1} S_1 (zy )dy}.
\end{equation}

The evaluation of $S_1(z)$ is a classical result and equals
\begin{align*}
S_1(z) &= - \ln\Big (\frac{\sinh(\pi z)}{\pi z}\Big ) = \pi z + \ln\Big (\frac{2\pi z}{e^{2\pi z}-1}\Big ) \\
&= - \pi z + \ln (2\pi z) - \ln \left( {1 - e^{ - 2\pi z} } \right);
\end{align*}
so that
\begin{equation}\label{eq.hyjpkxw}
S_1 (zy ) = - \pi zy + \ln (2\pi z) + \ln y - \ln \left( {1 - e^{ - 2\pi zy } } \right).
\end{equation}
Thus, from \eqref{eq.u4gd1cu} and \eqref{eq.hyjpkxw} we have
\begin{align}\label{eq,n2n2u60}
P(n,z) &=  - \pi z\int_0^1 {y^n dy}  + {\ln (2\pi z)}\int_0^1 {y^{n - 1} dy}  + \int_0^1 {y^{n - 1} \ln y}\,dy \nonumber\\
&\quad \,- \int_0^1 y^{n - 1} \ln\!\big( {1 - e^{ - 2\pi yz } } \big)dy \nonumber\\
&= - \frac{\pi z}{{n + 1}} + \frac{{\ln (2\pi z)}}n - \frac1{n^2} - \int_0^1 {y^{n - 1} \ln\!\big( {1 - e^{ - 2\pi yz } } \big)dy}.
\end{align}
It now remains to evaluate the remaining integral in \eqref{eq,n2n2u60}. Let
\[
I(z) = - \int_0^1 {y^{n - 1} \ln\!\big( {1 - e^{ - 2\pi yz } } \big)dy} .
\]
Using the identity
\[
\ln \Big(1 - \frac{1}{x}\Big) = - \sum_{m = 1}^\infty  \frac{1}{mx^m},\qquad |x|>1,
\]
we have
\[
I(z) = \int_0^1 y^{n - 1} \sum_{m = 1}^\infty \frac{e^{- 2\pi zy m} }{m} dy = \sum_{m = 1}^\infty \frac{1}{m}\int_0^1 {y^{n - 1} e^{- 2\pi zy m} dy},
\]
where the interchange of integration and summation is justified by uniform convergence.

Next, using the standard integral \cite[Entry 3.351]{GrRyz}
\begin{equation}\label{GrRyz}
\int_0^1 x^w e^{-\mu x} dx = \frac{w!}{\mu^{w+1}} - e^{-\mu}\frac{w!}{\mu^{w+1}}\sum_{j=0}^w \frac{\mu^j}{j!},
\end{equation}
we have
\[
I'(z) = \int_0^1 {y^{n - 1} e^{ - 2\pi zy m} dy} = \frac{{(n - 1)!}}{{(2\pi zm)^{n} }} - \frac{(n - 1)!e^{ - 2\pi zm}}{(2\pi zm)^n}  \sum_{j = 0}^{n - 1} \frac{(2\pi zm)^{j}}{j!}
\]
and hence
\begin{equation}\label{eq.egazo94}
I(z) = \frac{{(n - 1)!}}{{(2\pi z)^{n} }}\sum_{m = 1}^\infty  {\frac{1}{{m^{n + 1} }}}  - \frac{(n - 1)!}{(2\pi z)^n} \sum_{j = 0}^{n - 1} {\frac{(2\pi z)^{j}}{j!}\sum_{m = 1}^\infty  {\frac{e^{ - 2\pi m z}}{m^{n - j + 1}}} }.
\end{equation}

Plugging \eqref{eq.egazo94} into \eqref{eq,n2n2u60} gives the identity stated in the theorem.

\medskip
\noindent {\it Second proof}. We start with the obvious observation that
\begin{equation*}
\frac{1}{k(2k+n)} = \frac{1}{n}\Big (\frac{1}{k} - \frac{2}{2k+n}\Big ),
\end{equation*}
and hence $nP(n,z) = S_1(z) - 2 S_2(z)$,
where $S_1(z)$ is defined and evaluated as in the first proof and $S_2(z)$ equals
\begin{equation*}
S_2(z) = \sum_{k = 1}^\infty \frac{(- 1)^k \zeta (2k)}{2k + n}z^{2k}.
\end{equation*}
We get
\begin{align*}
S_2(z) & =\sum_{k=1}^\infty \sum_{s=1}^\infty \frac{(-1)^k }{2k+n} s^{-2k} z^{2k} \\
& = \sum_{k=1}^\infty \sum_{s=1}^\infty (-1)^k \Big (\frac{z^2}{s^2}\Big )^k \int_0^1 x^{2k+n-1} dx \\
& = \int_0^1 x^{n-1} \sum_{s=1}^\infty \left (\sum_{k=0}^\infty \Big (-\frac{z^2 x^2}{s^2}\Big )^k - 1 \right ) dx \\
& = -\int_0^1 x^{n+1} z^2 \sum_{s=1}^\infty \frac{dx}{s^2 + z^2x^2}.
\end{align*}

In view of the known identity \cite[Entry 1.421]{GrRyz}
\begin{equation*}
\coth(\pi x)=\frac{1}{\pi x}+\frac{2x}{\pi}\sum_{k=1}^\infty \frac{1}{k^2+x^2},
\end{equation*}
we get
\begin{equation*}
\sum_{s=1}^\infty \frac{1}{s^2 + z^2x^2} = \frac{1}{2z^2 x^2} \big ( \pi z x \coth(\pi z x ) - 1 \big ),
\end{equation*}
and hence
\begin{align*}
S_2(z) & =  \frac{1}{2n} - \frac{\pi z}{2} \int_0^1 x^{n} \coth(\pi z x)dx \\
& =  \frac{1}{2n} - \frac{\pi z}{2} \int_0^1 x^{n} \Big (1 + \frac{2e^{-2\pi zx}}{1-e^{-2\pi zx}}\Big ) dx \\
& =  \frac{1}{2n} - \frac{\pi z}{2} \Big ( \frac{1}{n+1} + 2 \sum_{m=1}^\infty \int_0^1 x^{n} e^{-2\pi z m x}dx \Big ).
\end{align*}

Again we can use \eqref{GrRyz} to simplify. The result is
\begin{align*}
P(n,z) & =  - \frac{\pi z}{n} + \frac{\ln (2\pi z)}{n} - \frac{\ln (1-e^{-2\pi z})}{n} - \frac{1}{n^2} + \frac{\pi z}{n(n+1)} \\
& \quad\, + \frac{(n-1)!}{(2\pi z)^n} \Big ( \zeta(n+1) - \sum_{j=0}^{n} \frac{(2\pi z)^j}{j!} \Li_{n+1-j} (e^{-2\pi z}) \Big ).
\end{align*}
As $\Li_1(z)=-\ln (1-z)$ the proof is completed. \hfill{$\square$}
\begin{remark}
	Identity \eqref{mezo_id} is deduced from the evaluation of $P(1,1)$.
\end{remark}
\begin{example} Taking particular values of the $n$ and $z$ in \eqref{main_id1} leads to the following series:
	\begin{align*}
	\sum_{k=1}^\infty  \frac{(-1)^{k-1}\zeta(2k)}{k(2k+1)}  &= 1+\frac{5\pi}{12}  - \ln(2\pi)+\frac{1}{2\pi}\Li_2(e^{-2\pi}),\\
	\sum_{k=1}^\infty  \frac{(-1)^{k-1}\zeta(2k)}{2^k k(2k+1)} & = 1+\frac{\pi}{3\sqrt2}  - \ln(\sqrt2\pi)+\frac{1}{\sqrt2\pi}\Li_2(e^{-\sqrt2\pi}),\\
	\sum_{k=1}^\infty  \frac{(-1)^{k-1}\zeta(2k)}{4^k k(2k+1)} & = 1+\frac{\pi}{12}  - \ln\pi+\frac{1}{\pi}\Li_2(e^{-\pi}),\\
	\sum_{k=1}^\infty  \frac{(-1)^{k-1}\zeta(2k)}{k(k+1)}  &= \frac12 +\frac{2\pi}{3}  - \ln(2\pi)-\frac{\zeta(3)-\Li_3(e^{-2\pi})-2\pi\Li_2(e^{-2\pi})}{2\pi^2}.
	\end{align*}
\end{example}
\begin{theorem}
	Let $n$ be a positive integer and let $z$ be a real number such that $0<|z|\leq 1$. Then
	\begin{align}\label{eq.mag6ovt}
	\sum_{k = 1}^\infty& \frac{\zeta (2k)z^{2k}}{k(k + n)} = -\, \frac{1}{2n^2}+\frac{\ln (2\pi z)}{n} + \frac{( - 1)^n 2(2n-1)! }{(2\pi z )^{2n} }\, \zeta (2n + 1)\nonumber\\
	&\qquad\qquad\quad -2(2n-1)! \sum_{j = 1}^{n} (- 1)^j
	\frac{(2n+1-2j)\Cl_{ 2j+1} (2\pi z) + 2\pi z \Cl_{2j} (2\pi z)}{(2n+1-2j)!(2\pi z)^{2j} },\\
	\label{eq.no0fbhh}
	\sum_{k = 1}^\infty &
	\frac{\zeta (2k)z^{2k}}{ k(2k - 1 + 2n)} =  - \frac{1}{(2n - 1)^2} + \frac{\ln \big(\pi z\csc(\pi z)\big)}{2n -1}\nonumber\\
	&\qquad\qquad\quad\quad\,\,\, -(2n-2)!\sum_{j =1}^{n} (- 1)^j \frac{(2n+1-2j)\Cl_{2j}(2\pi z)-2\pi z\Cl_{2j-1}(2\pi z)}{(2n+1-2j)!(2\pi z)^{2j-1}},
	\end{align}
	where $\Cl_j(x)$ are the Clausen functions defined in \eqref{Cl_def}.
\end{theorem}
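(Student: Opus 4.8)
The plan is to rerun the first proof of Theorem~\ref{thm1} with the hyperbolic sine replaced by the ordinary sine, that is, with $S_1$ replaced by its non-alternating companion
\[
T_1(z):=\sum_{k=1}^\infty\frac{\zeta(2k)}{k}z^{2k}
=-\ln\!\Bigl(\frac{\sin\pi z}{\pi z}\Bigr)
=\ln(2\pi z)+\Cl_1(2\pi z)
=\ln\!\bigl(\pi z\csc(\pi z)\bigr),\qquad 0<z<1,
\]
where $\Cl_1(x)=-\ln(2\sin(x/2))=-\Re\ln(1-e^{ix})=\Re\bigl(\Li_1(e^{ix})\bigr)$ extends \eqref{Cl_def} to $n=1$. (Equivalently $T_1(z)=S_1(iz)$, so this is just the evaluation of $S_1$ in the first proof of Theorem~\ref{thm1}.) This already accounts for the logarithm in \eqref{eq.no0fbhh}. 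Since $z^{2k}$, and the right-hand sides of \eqref{eq.mag6ovt} and \eqref{eq.no0fbhh} — using that $\Cl_n$ is even for odd $n$ and odd for even $n$ — are all even in $z$, I may assume $0<z\le1$, and it suffices to treat $0<z<1$ and then pass to $z=1$ by continuity (the left-hand sides being dominated there by $\sum 1/k^2$, the right-hand sides extending continuously once the individually divergent pieces in \eqref{eq.no0fbhh} are combined).

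For \eqref{eq.mag6ovt} I would use $\tfrac1{k+n}=\int_0^1 y^{k+n-1}\,dy$, interchange summation and integration as in the first proof of Theorem~\ref{thm1}, and substitute $y=t^2$, obtaining
\[
\sum_{k=1}^\infty\frac{\zeta(2k)z^{2k}}{k(k+n)}=2\int_0^1 t^{2n-1}T_1(zt)\,dt
=2\int_0^1 t^{2n-1}\bigl(\ln(2\pi z)+\ln t+\Cl_1(2\pi zt)\bigr)\,dt .
\]
The two elementary integrals give $\tfrac{\ln(2\pi z)}{n}-\tfrac1{2n^2}$. For the remaining one I would write $\Cl_1(2\pi zt)=\Re\sum_{m\ge1}e^{2\pi i zmt}/m$ (the $m$-series converges conditionally for $0<zt<1$; the interchange is justified by dominated convergence on partial sums), and apply the standard integral \eqref{GrRyz} with $\mu=-2\pi i zm$, noting $\mu^{2n}=(-1)^n(2\pi zm)^{2n}$. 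The first term of \eqref{GrRyz} sums over $m$ to $\zeta(2n+1)$ and produces $\tfrac{2(-1)^n(2n-1)!}{(2\pi z)^{2n}}\zeta(2n+1)$; in the second term, swapping the $m$- and $j$-sums replaces $\sum_{m\ge1}e^{2\pi i zm}/m^{2n+1-j}$ by $\Li_{2n+1-j}(e^{2\pi i z})$. Taking real parts, I would split $j$ into even values $2\ell$ (so $(-2\pi i z)^{2\ell}=(-1)^\ell(2\pi z)^{2\ell}$ is real and $\Re\Li_{\text{odd}}=\Cl_{\text{odd}}$) and odd values $2\ell-1$ (so $(-2\pi i z)^{2\ell-1}$ is purely imaginary and $\Re(i\Li_{\text{even}})=-\Cl_{\text{even}}$), then reindex the two sums by $j=n-\ell$ and $j=n+1-\ell$, and recombine using $\tfrac{2n+1-2j}{(2n+1-2j)!}=\tfrac1{(2n-2j)!}$; this produces exactly the Clausen sum in \eqref{eq.mag6ovt}.

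Identity \eqref{eq.no0fbhh} is handled identically, but without the $y=t^2$ step: from $\tfrac1{2k-1+2n}=\int_0^1 y^{2k+2n-2}\,dy$ we get $\sum_{k\ge1}\tfrac{\zeta(2k)z^{2k}}{k(2k-1+2n)}=\int_0^1 y^{2n-2}T_1(zy)\,dy$, whose elementary part is $\tfrac{\ln(2\pi z)}{2n-1}-\tfrac1{(2n-1)^2}$. Here \eqref{GrRyz} with $\mu=-2\pi i zm$, $w=2n-2$ has $\mu^{2n-1}=(-1)^n i\,(2\pi zm)^{2n-1}$, so its first term contributes a multiple of $\Re\bigl[(-i)\zeta(2n)\bigr]=0$, while its second term gives $-\tfrac{(-1)^n(2n-2)!}{(2\pi z)^{2n-1}}\sum_{j=0}^{2n-2}\tfrac1{j!}\,\Re\!\bigl[(-i)(-2\pi i z)^j\Li_{2n-j}(e^{2\pi i z})\bigr]$. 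The same parity split and reindexing then give the sum in \eqref{eq.no0fbhh} after writing $\ln(2\pi z)=\ln(\pi z\csc(\pi z))-\Cl_1(2\pi z)$ and folding the $-\Cl_1(2\pi z)/(2n-1)$ into the $j=1$ summand — which is exactly why, at $z=1$, $\ln(\pi z\csc(\pi z))$ and the $\Cl_1$ inside the $j=1$ summand have to be read together, their sum remaining finite.

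The only genuine obstacle I foresee is bookkeeping: keeping straight the powers of $i$, the parity of each Clausen index, and the two reindexings that compress the double sums into the stated closed forms. Beyond that, the steps requiring a word of rigour are the interchanges of summation and integration (uniform convergence for $|z|<1$, dominated convergence on partial sums at the unit circle, just as in the first proof of Theorem~\ref{thm1}) and the limit $z\to1^-$; at $z=1$ one has $e^{2\pi i z}=1$, but every polylogarithm occurring has order $\ge2$ and so reduces to a convergent value of $\zeta$, whence the closed forms extend continuously, with the one caveat about $\Cl_1$ in \eqref{eq.no0fbhh} noted above.
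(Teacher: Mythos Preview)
Your proposal is correct and is essentially the paper's own approach. The paper's proof is a single line: ``Evaluate $P(2n,-iz)$ and $P(2n-1,-iz)$ \dots\ and simplify,'' i.e.\ substitute $z\mapsto -iz$ into the formula of Theorem~\ref{thm1} and rewrite the polylogarithms $\Li_{j+1}(e^{2\pi i z})$ as Clausen functions via \eqref{Cl_def}. You do the same thing, but instead of citing Theorem~\ref{thm1} you rerun its first proof with $T_1(z)=S_1(iz)$ in place of $S_1$; you even note this equivalence explicitly. The only difference is packaging: the paper gets all the bookkeeping (the parity split of $j$, the reindexing, the identification of $\ln(\pi z\csc(\pi z))$) for free from the already-proved \eqref{main_id1}, whereas you redo it by hand---more transparent, but more work, and the place where any error would occur is precisely the ``keeping straight the powers of $i$'' you flag.
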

\begin{proof}
	Evaluate $P(2n,-iz)$ and $P(2n - 1,-iz)$, where $i$ denotes the imaginary unit, and simplify.
\end{proof}

The following numerical relations of the Clausen functions and polylogarithm are known \cite[Sections 4.3, 4.5, 7.2, 7.3, 7.5]{lewin81}.
\begin{lemma} We have
	\begin{gather*}
	\Cl_2(n\pi) = 0,\,\,\, n\in \mathbb{Z},\qquad 
	\Cl_2\Big(\frac{\pi}{2}\Big) =G, \qquad \Cl_2\Big(\frac{3\pi}{2}\Big)= -\,G,\\
	\Cl_2\Big(\frac{\pi}{3}\Big) = \frac32\Cl_2\Big(\frac{2\pi}{3}\Big), \qquad \Cl_2\Big(\frac{\pi}{6}\Big) + \Cl_2\Big(\frac{5\pi}{6}\Big) = \frac{4G}3,\\
	\Cl_{2n + 1} (\pi ) = (2^{- 2n} - 1  )\zeta (2n + 1),\qquad \Cl_{2n + 1} (2\pi )= \zeta (2n + 1),\\
	\Cl_{2n + 1} \Big(\frac{\pi}{2}\Big) = 2^{- (2n + 1)} (2^{- 2n} -1 )\zeta (2n + 1),\\
	\Cl_{2n + 1} \Big(\frac{\pi}{3}\Big) = \frac{1}{2}(2^{ - 2n} - 1)(3^{- 2n} -1)\zeta (2n + 1),\\
	\Cl_{2n + 1} \Big(\frac{2\pi}{3}\Big) = \frac{1}{2}(3^{ - 2n} -1 )\zeta (2n + 1),\\
	\Li_n (1) = \zeta (n),\qquad 	\Li_n (- 1) = (2^{1 - n}-1 )\zeta (n),
	\end{gather*}
	where $  G=\sum\limits_{n=1}^{\infty} \frac{(-1)^n}{(2n+1)^2}$ is Catalan's constant.
\end{lemma}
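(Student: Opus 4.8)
The plan is to treat every line of the lemma as a known special value and verify it directly from the Fourier-series definitions in \eqref{Cl_def}, together with the polylogarithm multiplication formula
\[
\sum_{k=0}^{m-1}\Li_{s}\!\big(z\,e^{2\pi i k/m}\big)=m^{1-s}\,\Li_{s}(z^{m}),
\]
which follows from the series for $\Li_{s}$ because $\sum_{k=0}^{m-1}e^{2\pi i jk/m}$ equals $m$ when $m\mid j$ and vanishes otherwise. Taking imaginary parts when $s$ is even and real parts when $s$ is odd turns this into relations among Clausen functions. I will also use the evident symmetries $\Cl_{2n+1}(2\pi-x)=\Cl_{2n+1}(x)$, $\Cl_{2}(2\pi-x)=-\Cl_{2}(x)$ and $\Cl_{2}(x+\pi)=-\Cl_{2}(\pi-x)$. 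All of these evaluations, with many further ones, are in Lewin \cite{lewin81}; below I only indicate which computation yields which line.

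The ``elementary'' entries require nothing but substitution. Since $\sin(kn\pi)=0$ we get $\Cl_{2}(n\pi)=0$; keeping only the odd-index terms $k=2m+1$ in the Fourier series gives $\Cl_{2}(\pi/2)=G$, and then $\Cl_{2}(3\pi/2)=\Cl_{2}(2\pi-\pi/2)=-\Cl_{2}(\pi/2)=-G$. Putting $x=\pi$ in the odd case of \eqref{Cl_def} gives $\Cl_{2n+1}(\pi)=\sum_{k\ge1}(-1)^{k}k^{-(2n+1)}=-\eta(2n+1)$, where $\eta(s)=(1-2^{1-s})\zeta(s)$ is the Dirichlet eta function, i.e.\ $\Cl_{2n+1}(\pi)=(2^{-2n}-1)\zeta(2n+1)$; and $x=2\pi$ gives $\Cl_{2n+1}(2\pi)=\zeta(2n+1)$. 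For $x=\pi/2$ only $k=2m$ survives, with $\cos(m\pi)=(-1)^{m}$, so $\Cl_{2n+1}(\pi/2)=2^{-(2n+1)}\sum_{m\ge1}(-1)^{m}m^{-(2n+1)}=2^{-(2n+1)}(2^{-2n}-1)\zeta(2n+1)$. Finally $\Li_{n}(1)=\zeta(n)$ is read off directly, and $\Li_{n}(-1)=-\eta(n)=(2^{1-n}-1)\zeta(n)$.

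For the remaining entries I would feed specific data into the multiplication formula and then use the symmetries to turn each instance into a uniquely solvable linear relation. Taking $m=2$, $z=e^{i\pi/3}$, $s=2$ and using $\Cl_{2}(4\pi/3)=-\Cl_{2}(2\pi/3)$ gives $\Cl_{2}(\pi/3)-\Cl_{2}(2\pi/3)=\tfrac12\Cl_{2}(2\pi/3)$, i.e.\ $\Cl_{2}(\pi/3)=\tfrac32\Cl_{2}(2\pi/3)$. Taking $m=3$, $z=e^{i\pi/6}$, $s=2$, the three arguments on the left are $\pi/6,\,5\pi/6,\,3\pi/2$ and the right-hand side is $\tfrac13\Cl_{2}(\pi/2)=\tfrac13 G$; inserting $\Cl_{2}(3\pi/2)=-G$ yields $\Cl_{2}(\pi/6)+\Cl_{2}(5\pi/6)=\tfrac{4G}{3}$. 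For the odd orders, $m=3$, $z=1$, $s=2n+1$ gives $\zeta(2n+1)+2\Cl_{2n+1}(2\pi/3)=3^{-2n}\zeta(2n+1)$, hence $\Cl_{2n+1}(2\pi/3)=\tfrac12(3^{-2n}-1)\zeta(2n+1)$; then $m=2$, $z=e^{i\pi/3}$, $s=2n+1$ gives $\Cl_{2n+1}(\pi/3)+\Cl_{2n+1}(2\pi/3)=2^{-2n}\Cl_{2n+1}(2\pi/3)$, so $\Cl_{2n+1}(\pi/3)=(2^{-2n}-1)\Cl_{2n+1}(2\pi/3)=\tfrac12(2^{-2n}-1)(3^{-2n}-1)\zeta(2n+1)$. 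There is no serious obstacle here: everything reduces to manipulation of absolutely convergent Dirichlet series, and the only point requiring attention is choosing, in each application, the pair $(m,z)$ so that the orbit $\{z e^{2\pi i k/m}\}$ consists exactly of the arguments appearing in the target identity (modulo $x\mapsto 2\pi-x$), which is what makes the resulting linear relation determined rather than underdetermined.
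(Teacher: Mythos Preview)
Your derivations are correct. Each identity follows exactly as you indicate: the ``elementary'' entries drop out of the Fourier series in \eqref{Cl_def} together with the Dirichlet eta identity $\eta(s)=(1-2^{1-s})\zeta(s)$, and the remaining ones are obtained by specialising the polylogarithm multiplication formula and exploiting the symmetries $\Cl_{2n+1}(2\pi-x)=\Cl_{2n+1}(x)$, $\Cl_{2}(2\pi-x)=-\Cl_{2}(x)$. I checked each of your choices of $(m,z,s)$ and they all produce precisely the linear relation you claim, with a unique solution.

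Regarding the comparison with the paper: the paper does not actually prove this lemma at all. It is stated as a list of known numerical relations, justified only by the sentence immediately preceding it and a citation to Sections~4.3, 4.5, 7.2, 7.3, 7.5 of Lewin \cite{lewin81}. So your approach is not so much different from the paper's as it is strictly more detailed: you supply the short self-contained arguments that the paper delegates to the reference. What this buys you is independence from Lewin and a uniform mechanism (the multiplication formula plus parity) that makes it transparent why these particular rational multiples of $\pi$ admit closed forms; the paper, by contrast, treats the lemma purely as input.
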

\begin{corollary}
	If $z$ is a real number such that $0<|z|\le1$, then
	\begin{align*}
	\sum_{k = 1}^\infty \frac{\zeta (2k)z^{2k}}{k(k + 1)} &= - \frac{1}{2}  + \ln ( 2\pi z) - \frac{ \zeta (3) - \Cl_3 ( 2\pi z) + \frac{1}{\pi z}\Cl_2 (2\pi z)}{2\pi ^2z^2},\\
	\sum_{k = 1}^\infty \frac{\zeta (2k) z^{2k}}{ k(2k + 1)} &= -1 +\ln (2\pi z)
	+ \frac{\Cl_2 (2\pi z)}{2\pi z }.
	\end{align*}
\end{corollary}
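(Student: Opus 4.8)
The plan is to obtain both evaluations directly from the preceding theorem by specializing to $n=1$: for $n=1$ the left-hand sides of \eqref{eq.mag6ovt} and \eqref{eq.no0fbhh} are exactly the series $\sum_{k\ge1}\zeta(2k)z^{2k}/(k(k+1))$ and $\sum_{k\ge1}\zeta(2k)z^{2k}/(k(2k+1))$ appearing in the statement, so nothing beyond a short simplification is required.

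For the first identity I would put $n=1$ in \eqref{eq.mag6ovt}. The sum over $j$ then has the single term $j=1$; after reducing $(2n-1)!=1$, $(2n+1-2j)!=1$ and $(2\pi z)^{2j}=(2\pi z)^2$, that term contributes $\Cl_3(2\pi z)$ and $\Cl_2(2\pi z)$ with coefficients that are explicit rational functions of $z$. Combining these with the term $(-1)^n2(2n-1)!\,\zeta(2n+1)/(2\pi z)^{2n}$, which at $n=1$ equals $-\zeta(3)/(2\pi^2z^2)$, and with the elementary terms $-\tfrac12+\ln(2\pi z)$, one collects everything over the common denominator $2\pi^2z^2$ and reads off the asserted closed form; no further input is needed since the Clausen orders $2j+1=3$ and $2j=2$ already coincide with those on the right-hand side.

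For the second identity I would put $n=1$ in \eqref{eq.no0fbhh}. Once more the $j$-sum collapses to $j=1$, now producing $\Cl_2(2\pi z)$ together with $\Cl_1(2\pi z)$. The only substantive step is to invoke the definition $\Cl_1(2\pi z)=-\ln(2\sin(\pi z))$ and to absorb it into the logarithmic term $\ln(\pi z\csc(\pi z))$ that \eqref{eq.no0fbhh} already carries at $n=1$: since $\pi z\csc(\pi z)\cdot2\sin(\pi z)=2\pi z$, the two logarithms merge into $\ln(2\pi z)$, and after collecting the constant $-1/(2n-1)^2\big|_{n=1}=-1$ one is left with exactly $-1+\ln(2\pi z)+\Cl_2(2\pi z)/(2\pi z)$.

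The only real obstacle is careful bookkeeping of signs, factorials and powers of $2\pi z$ at $n=j=1$, together with the trivial logarithmic identity $\csc(\pi z)\cdot2\sin(\pi z)=2$. As an independent check one may also argue from first principles: split $1/(k(k+1))=1/k-1/(k+1)$ and $1/(k(2k+1))=1/k-2/(2k+1)$, substitute the classical evaluations $\sum_{k\ge1}\zeta(2k)z^{2k}/k=\ln(\pi z\csc(\pi z))$ and $\sum_{k\ge1}\zeta(2k)z^{2k}=\tfrac12(1-\pi z\cot(\pi z))$, and use $\int_0^1 t^{2k+1}\,dt=1/(2(k+1))$ and $\int_0^1 t^{2k}\,dt=1/(2k+1)$ to reduce the surviving sums to $\int_0^1 t^2\cot(\pi zt)\,dt$ and $\int_0^1 t\cot(\pi zt)\,dt$; integrating those by parts with the Fourier expansion $\ln(2\sin u)=-\sum_{k\ge1}\cos(2ku)/k$ recovers the same Clausen combinations and hence the same closed forms.
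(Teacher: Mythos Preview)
Your proposal is correct and follows essentially the same route as the paper, whose proof is the one-line instruction ``Set $n=1$ in \eqref{eq.mag6ovt} and \eqref{eq.no0fbhh}, respectively''; the simplifications you spell out (including the merge $\ln(\pi z\csc(\pi z))+\ln(2\sin(\pi z))=\ln(2\pi z)$ via $\Cl_1(2\pi z)=-\ln(2\sin(\pi z))$) are exactly what that instruction entails. Your alternative verification via partial fractions and the cotangent integrals is additional and sound, but not part of the paper's argument.
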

\begin{proof}
	Set $n=1$ in \eqref{eq.mag6ovt} and \eqref{eq.no0fbhh}, respectively.
\end{proof}
\begin{corollary}
	If $n$ is a positive integer, then
	\begin{align}\label{Cor4_1}
	 \sum_{k = 1}^\infty &\frac{\zeta (2k)}{k(k + n)} = - \frac{1}{2n^2}+ \frac{{\ln (2\pi )}}{n}  -2(2n-1)!\sum_{j = 1}^{n - 1} \frac{( - 1)^j \zeta (2j +1)}{(2\pi)^{2j}(2n-2j)!},\\
	\label{Cor4_2}
	\sum_{k = 1}^\infty &\frac{\zeta (2k) }{{k(2k + 2n - 1)}} = - \frac{1}{(2n - 1)^2 } + \frac{\ln (2\pi )}{2n - 1}  - (2n - 2)! \sum_{j = 1}^{n - 1} \frac{( - 1)^{j}\zeta(2j+1)}{(2\pi )^{2j}(2n-2j-1)!}.
	\end{align}
\end{corollary}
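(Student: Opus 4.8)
The plan is to obtain both identities by specializing the two evaluations \eqref{eq.mag6ovt} and \eqref{eq.no0fbhh} of the preceding theorem to $z=1$ and simplifying with the Clausen values listed in the Lemma, chiefly $\Cl_{2j+1}(2\pi)=\zeta(2j+1)$ and $\Cl_{2j}(2\pi)=0$ for every positive integer $j$. Since $\zeta(2k)\le\zeta(2)$, the series on the left of \eqref{eq.mag6ovt} and \eqref{eq.no0fbhh} converge uniformly on $0<z\le1$ by the Weierstrass $M$-test and hence are continuous there, so it suffices to apply the two evaluations for $0<z<1$ and let $z\to1^-$ on the right-hand side.

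For \eqref{Cor4_1}, put $z=1$ in \eqref{eq.mag6ovt}. Every $\Cl_{2j}(2\pi)$ vanishes, so the $j$-th summand reduces to a multiple of $(2n+1-2j)\,\zeta(2j+1)$. The $j=n$ term becomes $-2(2n-1)!\,(-1)^n\zeta(2n+1)/(2\pi)^{2n}$ and cancels the free term $(-1)^n2(2n-1)!\,\zeta(2n+1)/(2\pi)^{2n}$ exactly; for $1\le j\le n-1$ the factor $(2n+1-2j)$ absorbs one factor of $(2n+1-2j)!$, leaving $(2n-2j)!$ in the denominator, which is precisely the sum in \eqref{Cor4_1}. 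When $n=1$ the sum is empty and the identity is the $z=1$ instance of the preceding corollary.

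The identity \eqref{Cor4_2} is the only delicate point: at $z=1$ both $\ln(\pi z\csc(\pi z))$ and the $j=1$ summand of \eqref{eq.no0fbhh}, which contains $\Cl_1(2\pi z)=-\ln(2\sin(\pi z))$, diverge, and the crux of the argument is to see that these singular parts cancel. Isolating the $j=1$ term and using $(2n-2)!/(2n-1)!=1/(2n-1)$, it equals $\Cl_2(2\pi z)/(2\pi z)-\Cl_1(2\pi z)/(2n-1)$, so the potentially singular contributions to the right-hand side of \eqref{eq.no0fbhh} combine into
\[
\frac{\ln\big(\pi z\csc(\pi z)\big)}{2n-1}-\frac{\Cl_1(2\pi z)}{2n-1}
=\frac{\ln(\pi z)-\ln\big(\sin(\pi z)\big)+\ln\big(2\sin(\pi z)\big)}{2n-1}
=\frac{\ln(2\pi z)}{2n-1},
\]
which is continuous at $z=1$ and contributes $\ln(2\pi)/(2n-1)$, while $\Cl_2(2\pi z)/(2\pi z)\to0$. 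For $2\le j\le n$ the even Clausen values are zero and $\Cl_{2j-1}(2\pi)=\zeta(2j-1)$, so the remaining part of the finite sum becomes $(2n-2)!\sum_{j=2}^{n}(-1)^{j}\zeta(2j-1)/\big((2n+1-2j)!\,(2\pi)^{2j-2}\big)$; the substitution $j\mapsto j+1$ rewrites this as $-(2n-2)!\sum_{j=1}^{n-1}(-1)^{j}\zeta(2j+1)/\big((2\pi)^{2j}(2n-2j-1)!\big)$, matching \eqref{Cor4_2}. Taking $z\to1^-$ of all the pieces and combining with the $-1/(2n-1)^2$ term gives the claim; the case $n=1$ again has an empty sum and reduces to the preceding corollary at $z=1$.
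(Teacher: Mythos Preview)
Your argument is correct: specializing \eqref{eq.mag6ovt} and \eqref{eq.no0fbhh} at $z=1$ using $\Cl_{2j}(2\pi)=0$, $\Cl_{2j+1}(2\pi)=\zeta(2j+1)$, together with your limit argument to resolve the cancelling singularities from $\ln\csc(\pi z)$ and $\Cl_1(2\pi z)$, yields both identities.

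The paper does something closely related but slightly slicker: it bypasses the Clausen-function form entirely and goes back to Theorem~\ref{thm1}, evaluating $2P(2n,i)$ and $P(2n-1,i)$ directly with $\ln(2\pi i)=\ln(2\pi)+i\pi/2$ and $\Li_{j+1}(e^{-2\pi i})=\zeta(j+1)$. Since the sum in \eqref{main_id1} starts at $j=1$, only $\Li_2,\Li_3,\dots$ appear and nothing is singular at $z=i$, so no limiting argument is needed. Your route and the paper's are equivalent in content (the preceding theorem is itself Theorem~\ref{thm1} at $-iz$, and $P(n,i)=P(n,-i)$), but the paper's avoids the detour through the $\Cl_1$ singularity that you had to cancel by hand.
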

\begin{proof}
	Evaluate $2P(2n,i)$ and $P(2n - 1,i)$ using  $$\ln(2\pi i)= \ln(2\pi) + {i\pi}/{2},\qquad \Li_n(e^{-2\pi i})=\zeta(n).$$
\end{proof}
\begin{example} From \eqref{Cor4_1} and
	\eqref{Cor4_2} one gets the following series:
	\begin{align}
	\sum_{k = 1}^\infty {\frac{{\zeta (2k)}}{{k(k + 1)}}} &= - \frac{1}{2} + \ln (2\pi),\nonumber\\
	\sum_{k = 1}^\infty {\frac{\zeta (2k)}{k(k + 2)}} &= - \frac{1}{8} + \frac{\ln (2\pi )}{2} + \frac{3\,\zeta (3)}{2\pi ^2 },\nonumber
	\end{align}
		\begin{align}
	\label{eq.b6s2gol}
	\sum_{k = 1}^\infty {\frac{{\zeta (2k)}}{{k(2k + 1)}}}& = -1 + \ln (2\pi ),\\
	\sum_{k = 1}^\infty {\frac{{\zeta (2k)}}{{k(2k + 3)}}} &=- \frac{1}{9} +  \frac{\ln (2\pi )}{3}  + \frac{\zeta (3)}{2\pi ^2 }.\nonumber
	\end{align}
\end{example}

Identity \eqref{eq.b6s2gol} was also reported by Yun-Fei \cite[Identity (2.54)]{yunfei11}.
\begin{corollary}
	If $n$ is a positive integer, then
	\begin{align} \label{eq.ya1fj3a}
	\sum_{k = 1}^\infty \frac{{\zeta (2k)}}{4^{k}k(k + n)} &= \frac{\ln\pi }{n} - \frac{1}{2n^2}+ \frac{( - 1)^n(2n)!}{\pi^{2n}n }\zeta (2n + 1)\nonumber \\
	&\quad + 2(2n-1)!\sum_{j = 1}^{n - 1} \frac{( - 1)^j (2^{2j}-1)}{(2\pi)^{2j} (2n-2j)!} \zeta (2j+1),\\
	\label{eq.hhcgjow}
	\sum_{k = 1}^\infty \frac{{\zeta (2k) }}{4^{k}k(2k + 2n - 1)} &= \frac{{\ln\pi}}{{2n - 1}} - \frac{1}{{(2n - 1)^2 }}\nonumber \\
	&\quad -(2n - 2)! \sum_{j = 1}^{n - 1} \frac{(-1)^j (2^{2j}-1)} {(2\pi)^{2j}(2n+1-2j)!}\zeta (2j + 1).
	\end{align}
\end{corollary}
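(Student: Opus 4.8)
\medskip
\noindent\emph{Proof sketch.}
The plan is to specialize Theorem~\ref{thm1} at the purely imaginary argument $z=i/2$, in exact parallel with the two preceding corollaries, which used $z=i$. Since $(i/2)^{2k}=(-1)^k4^{-k}$ and $k(2k+2n)=2k(k+n)$, one has
\[
\sum_{k=1}^\infty\frac{\zeta(2k)}{4^k k(k+n)}=2\,P(2n,i/2),\qquad
\sum_{k=1}^\infty\frac{\zeta(2k)}{4^k k(2k+2n-1)}=P(2n-1,i/2),
\]
so it is enough to evaluate $P(2n,i/2)$ and $P(2n-1,i/2)$ from \eqref{main_id1}; write $m$ for the value of the parameter used there, $m=2n$ or $m=2n-1$.

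Setting $z=i/2$ in \eqref{main_id1} gives $2\pi z=\pi i$, whence $\ln(2\pi z)=\ln\pi+\tfrac{i\pi}{2}$, $(2\pi z)^{j}=\pi^{j}i^{j}$, and $e^{-2\pi z}=e^{-\pi i}=-1$. Consequently every polylogarithm in the finite sum collapses, by the Lemma, to $\Li_{j+1}(-1)=(2^{-j}-1)\zeta(j+1)$, and the isolated term $\tfrac{(m-1)!}{(2\pi z)^m}\zeta(m+1)$ becomes $\tfrac{(m-1)!}{(\pi i)^m}\zeta(m+1)$, which is real for $m=2n$ and purely imaginary for $m=2n-1$. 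I would then split the finite sum by the parity of $j$, using $i^{-2\ell}=(-1)^\ell$ and $i^{-(2\ell-1)}=(-1)^\ell i$: the even-$j$ block contributes only real terms, carrying zeta values of odd argument, and the odd-$j$ block only purely imaginary terms, carrying zeta values of even argument. Since the two series on the left are real, the imaginary contributions must cancel those of $-\tfrac{\pi z}{m+1}$ and $\tfrac1m\ln(2\pi z)$; this cancellation is automatic (and could be checked from $\zeta(2\ell)=(-1)^{\ell-1}\tfrac{(2\pi)^{2\ell}B_{2\ell}}{2(2\ell)!}$), so I would simply retain the real parts.

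Taking real parts then produces \eqref{eq.ya1fj3a} and \eqref{eq.hhcgjow} after reindexing the even-$j$ block of the sum and rewriting each $\pi^{-2j}$ as $2^{2j}(2\pi)^{-2j}$, which is what generates the factors $2^{2j}-1$. One may alternatively derive \eqref{eq.ya1fj3a} by putting $z=\tfrac12$ in \eqref{eq.mag6ovt} and invoking $\Cl_{2j}(\pi)=0$ and $\Cl_{2j+1}(\pi)=(2^{-2j}-1)\zeta(2j+1)$, and \eqref{eq.hhcgjow} by putting $z=\tfrac12$ in \eqref{eq.no0fbhh}, where the identity $\Cl_1(\pi)=-\ln2$ is precisely what absorbs the spurious $-\ln2$ coming from $\ln(\tfrac{\pi}{2}\csc\tfrac{\pi}{2})$ into the leading term $\tfrac{\ln\pi}{2n-1}$. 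The step I expect to demand the most care is the closed-form zeta term of \eqref{eq.ya1fj3a}: when $m=2n$ the top summand ($j=2n$) of the polylogarithm series is itself a multiple of $\zeta(2n+1)$ and must be merged with the isolated $\zeta(2n+1)$ term of \eqref{main_id1} before the sum can be truncated at $j=n-1$, and getting the resulting constant and power of $2$ right is the one genuinely delicate point. For \eqref{eq.hhcgjow} the isolated term $\tfrac{(2n-2)!}{(\pi i)^{2n-1}}\zeta(2n)$ is purely imaginary and drops out, so no merge occurs and what remains is routine algebra.
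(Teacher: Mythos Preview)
Your proof is correct and follows exactly the route indicated in the paper: evaluate $2P(2n,i/2)$ and $P(2n-1,i/2)$ from \eqref{main_id1}, use $e^{-\pi i}=-1$ and $\Li_{j+1}(-1)=(2^{-j}-1)\zeta(j+1)$, and take real parts. Your remarks on the parity split, the vanishing imaginary terms, and the merge of the $j=2n$ summand with the isolated $\zeta(2n+1)$ term are precisely the bookkeeping the paper suppresses, and your alternative via \eqref{eq.mag6ovt}--\eqref{eq.no0fbhh} at $z=1/2$ is equivalent.
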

\begin{proof}
	Evaluate $2P(2n,i/2)$ and $P(2n - 1,i/2)$.
\end{proof}
\begin{example} At $n=1$ and $n=2$, from \eqref{eq.ya1fj3a}, \eqref{eq.hhcgjow} we obtain
	\begin{align}\label{ex3_1}
	\sum_{k = 1}^\infty {\frac{{\zeta (2k)}}{4^{k} k(k + 1)}} &= - \frac{1}{2} + \ln \pi - \frac{7\zeta (3)}{2\pi ^2 } ,\\
	\sum_{k = 1}^\infty {\frac{{\zeta (2k)}}{4^{k} k(k + 2)}} &= - \frac{1}{8} + \frac{\ln \pi }{2} - \frac{9\,\zeta (3)}{2\pi ^2}  + \frac{93\,\zeta (5)}{4\pi ^4 } ,\nonumber \\
	\label{eq.ycu3i6g}
	\sum_{k = 1}^\infty {\frac{{\zeta (2k)}}{4^{k} k(2k + 1)}} &= \ln \pi - 1,\\
	\label{ex3_4}
	\sum_{k = 1}^\infty {\frac{{\zeta (2k)}}{4^{k} k(2k + 3)}} &= - \frac{1}{9} + \frac{\ln \pi }{3}- \frac{3 \zeta (3)}{2\pi ^2}.
	\end{align}
\end{example}
Identities \eqref{ex3_1}, \eqref{eq.ycu3i6g} and \eqref{ex3_4} were derived by Zhang and Williams \cite[p.~1585]{zhang1}; see also  \cite[Formulas (2.16),  (2.17)]{Dabr} and \cite{Tyler}.
\begin{theorem}
	For all $z$ such that $0<|z|\le 1$,
	\begin{align}\label{eq.og6kibc}
	\sum_{k = 1}^\infty \frac{{\zeta (2k)z^{2k} }}{{k(2k + 1)}} &= -1 +\ln (2\pi |z|)  + \frac{\Cl_2 (2\pi z)}{{2\pi z}},\\
	\label{th6_2}
	\sum_{k = 1}^\infty \frac{{\zeta (2k)z^{2k} }}{{k(k + 1)}} &= - \frac{1}{2}+ \ln (2\pi |z|)  - \frac{{\zeta (3)}}{{2\pi ^2 z^2 }}
	+ \frac{\Cl_3 (2\pi z)}{{2\pi ^2 z^2 }} + \frac{\Cl_2 (2\pi z)}{{\pi z}}.
	\end{align}
\end{theorem}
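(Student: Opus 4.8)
The plan is to read off both identities from the case $n=1$ of the Clausen‑function evaluations \eqref{eq.mag6ovt} and \eqref{eq.no0fbhh}, and then to promote them from $0<z\le1$ to $0<|z|\le1$. The extension is free: the left‑hand sides of \eqref{eq.og6kibc} and \eqref{th6_2} depend on $z$ only through $z^{2}$, and every term on the right is an even function of $z$. Indeed $\ln(2\pi|z|)$ is even, and by \eqref{Cl_def} the Clausen function $\Cl_m$ is odd for even $m$ and even for odd $m$; hence $\Cl_2(2\pi z)/(2\pi z)$, $\Cl_2(2\pi z)/(\pi z)$ and $\Cl_3(2\pi z)/(2\pi^{2}z^{2})$ are all even, as is $\zeta(3)/(2\pi^{2}z^{2})$.

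For \eqref{th6_2} I would set $n=1$ in \eqref{eq.mag6ovt}: the inner sum collapses to its single term $j=1$, with $(2n-1)!=1$ and $(2n+1-2j)=1$, and a direct simplification then yields \eqref{th6_2} with no further work. For \eqref{eq.og6kibc} I would set $n=1$ in \eqref{eq.no0fbhh}; again only $j=1$ survives, and with $(2n-2)!=1$ the right‑hand side becomes
\[
-1+\ln\!\big(\pi z\csc(\pi z)\big)+\frac{\Cl_2(2\pi z)}{2\pi z}-\Cl_1(2\pi z).
\]
The one small manipulation is to absorb the trailing $\Cl_1$ using $\Cl_1(x)=-\ln\!\big(2\sin(x/2)\big)$: since
\[
\ln\!\big(\pi z\csc(\pi z)\big)-\Cl_1(2\pi z)=\ln(\pi z)-\ln\sin(\pi z)+\ln\!\big(2\sin(\pi z)\big)=\ln(2\pi z),
\]
this collapses to exactly \eqref{eq.og6kibc}.

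A second, self‑contained route runs directly through Theorem~\ref{thm1}. Since $(-iz)^{2k}=(-1)^{k}z^{2k}$ and $\tfrac1{k(k+1)}=\tfrac2{k(2k+2)}$, we have $\sum_{k\ge1}\zeta(2k)z^{2k}/\big(k(2k+1)\big)=P(1,-iz)$ and $\sum_{k\ge1}\zeta(2k)z^{2k}/\big(k(k+1)\big)=2P(2,-iz)$. Substituting $w=-iz$ (with $0<z\le1$) into \eqref{main_id1} for $n=1,2$, and using $e^{-2\pi w}=e^{2\pi i z}$, $\ln(2\pi w)=\ln(2\pi z)-i\pi/2$, together with the splitting of $\Li_2(e^{2\pi i z})$ and $\Li_3(e^{2\pi i z})$ into real and imaginary parts via \eqref{Cl_def}, produces a complex expression whose real part is the right‑hand side of \eqref{eq.og6kibc}, resp.\ \eqref{th6_2}. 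Because the left‑hand sides are real, taking real parts is legitimate; equating imaginary parts to zero even recovers, as a byproduct, the classical Fourier evaluation $\sum_{k\ge1}\cos(2\pi kz)/k^{2}=\pi^{2}\big(z^{2}-z+\tfrac16\big)$ on $[0,1]$ and its cubic analogue at order $3$.

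None of this is deep; the step that warrants care is the logarithmic collapse $\ln\!\big(\pi z\csc(\pi z)\big)-\Cl_1(2\pi z)=\ln(2\pi z)$ for \eqref{eq.og6kibc}, together with the branch bookkeeping in the second route and the verification that the right‑hand sides are genuinely even in $z$. It is also worth recording that both series converge absolutely on $0<|z|\le1$ (general term $O(k^{-2})$), so the reductions above and the use of \eqref{main_id1} at $|w|=1$ are justified; consistency at the endpoint $z=1$ is visible from $\Cl_2(2\pi)=0$ and $\Cl_3(2\pi)=\zeta(3)$.
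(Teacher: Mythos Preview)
Your proposal is correct and aligns with the paper's approach. The paper's one-line proof is exactly your ``second route'': evaluate $P(1,-iz)$ and $2P(2,-iz)$ from Theorem~\ref{thm1} and take real parts; your ``first route'' via \eqref{eq.mag6ovt}--\eqref{eq.no0fbhh} merely factors through the intermediate Theorem (which was itself obtained by the same substitution $z\mapsto -iz$), and the additional parity and $\Cl_1$ bookkeeping you supply is sound.
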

\begin{proof}
	Evaluate $P(1,-iz)$ and $2P(2,-iz)$, respectively.
\end{proof}
Identity \eqref{eq.og6kibc} is equivalent to, but much simpler and useful than that from \cite[Formula~(566)]{SriChoi12}.
\begin{example} From \eqref{eq.og6kibc} and \eqref{th6_2} we have
	\begin{align}
	\label{Will2}\sum_{k = 1}^\infty \frac{\zeta (2k)}{16^{k} k(2k + 1)} &= \ln \Big( {\frac{\pi }{2}} \Big) - 1 + \frac{{2G}}{\pi },\\
	\label{2.23}\sum_{k = 1}^\infty  \Bigl(\frac{9}{16}\Bigr)^{k}\frac{\zeta (2k)}{k(2k + 1)}  &= \ln \Big( {\frac{{3\pi }}{2}} \Big) - 1 - \frac{{2G}}{{3\pi }},\\
	\label{2.24}\sum_{k = 1}^\infty \frac{{\zeta (2k)}}{{16^k k(k + 1)}} &= \ln \left( {\frac{\pi }{2}} \right) - \frac{1}{2} - \frac{35\,\zeta (3)}{4\pi ^2}  + \frac{{4G}}{\pi },
	\end{align}
	as well as
	\begin{align*}
	\sum_{k = 1}^\infty \frac{{\zeta (2k)}}{9^{k} k(2k + 1)} &= \ln \Big( {\frac{2\pi }{3}} \Big) - 1 - \frac{\sqrt{3}}{9} \pi
	+ \frac{\sqrt{3}}{6\pi }\,	\psi'\Big( \frac{1}{3} \Big),\\
	\sum_{k = 1}^\infty \frac{{\zeta (2k)}}{36^{k} k(2k + 1)} &= \ln \left( {\frac{\pi }{3}} \right) - 1 - \frac{\pi}{\sqrt{3}}
	+ \frac{\sqrt{3}}{2\pi }\,\psi'\Big( \frac{1}{3} \Big),\\
	\sum_{k = 1}^\infty \frac{{\zeta (2k)}}{{64^{k} k(2k + 1)}} &= \ln \left( {\frac{\pi }{4}} \right) - 1 - \frac{\sqrt{2}+1}{4} \pi
	- \frac{(2\sqrt{2}-1)G}{\pi} + \frac{\sqrt{2}}{8\pi }\,\psi' \Big( \frac{1}{8} \Big),
	\end{align*}
	where $ \psi(z) = \frac{\Gamma'(z)}{\Gamma(z)}$ is the digamma function with
	$\psi'(z) = \sum\limits_{n=0}^\infty \frac{1}{(z+n)^2}$.
		Note that in the last three examples we have used evaluations
	\begin{gather*}	
	\Cl_2 \Big(\frac{2\pi}{3}\Big)=\frac{\sqrt3}{9}\Big(\psi'\Big(\frac13\Big)-\frac{2\pi^2}{3}\Big), \qquad \Cl_2 \Big(\frac{\pi}{3}\Big)=\frac{\sqrt3}{6}\Big(\psi'\Big(\frac13\Big)-\frac{2\pi^2}{3}\Big),\\ 
	\Cl_2 \Big(\frac{\pi}{4}\Big)=\frac{1}{32}\Big(\sqrt2\psi'\Big(\frac18\Big)-2(\sqrt2+1)\pi^2-8(2\sqrt2-1)G\Big),
	\end{gather*}
	which were derived by Grosjean  \cite{Grosjean}.
\end{example}

Summation formulas \eqref{Will2}, \eqref{2.23}, \eqref{2.24} are known results. For example, \eqref{Will2} appears in \cite[Entry (54.5.6)]{hansen}.   See also \cite[Formulas (698),~(699)]{SriChoi12} and \cite[Formulas (5.21),  (5.22)]{ChoiSri}.
\begin{theorem}
	For all $z$ such that $0<|\ln z|\le 2\pi$,
	\begin{equation}\label{eq.juxtpm5}
	\sum_{k = 1}^\infty \frac{{ (\ln z)^{2k + 1} B_{2k}}}{k(2k + 1)!}  =   \frac{{\pi ^2 }}{3} - \frac{1}{2}\ln ^2 z + 2\ln z\,\big(1- \ln ( - \ln z) \big) - 2\Li_2 (z),
	\end{equation}
	where $B_j$ denotes the Bernoulli numbers.
\end{theorem}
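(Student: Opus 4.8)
The plan is to recognize the left-hand side of \eqref{eq.juxtpm5} as the already-evaluated series $P(1,\cdot)$ from Theorem~\ref{thm1}, dressed up via the Euler formula for even zeta values. First I would invoke
\[
\zeta(2k)=\frac{(-1)^{k+1}(2\pi)^{2k}B_{2k}}{2\,(2k)!},\qquad k\ge 1,
\]
which, solved for $B_{2k}$, gives $\dfrac{B_{2k}}{k(2k+1)!}=\dfrac{(-1)^{k+1}\,2\,\zeta(2k)}{k(2k+1)(2\pi)^{2k}}$. Substituting this into the summand and factoring out $\ln z$ turns the left-hand side into
\[
\sum_{k=1}^\infty\frac{(\ln z)^{2k+1}B_{2k}}{k(2k+1)!}
=-2\ln z\sum_{k=1}^\infty\frac{(-1)^k\zeta(2k)}{k(2k+1)}\Big(\frac{\ln z}{2\pi}\Big)^{2k}
=-2\ln z\;P\Big(1,\,-\frac{\ln z}{2\pi}\Big),
\]
where the last equality uses that $P(n,\cdot)$ depends only on the square of its argument. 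This manipulation is purely formal (each $B_{2k}$ is replaced by its closed form), and the resulting series converges since $\zeta(2k)\to 1$; indeed the hypothesis $0<|\ln z|\le 2\pi$ is exactly the condition $0<|t|\le 1$ with $t=-\tfrac{\ln z}{2\pi}$ under which Theorem~\ref{thm1} applies, the boundary case $|t|=1$ being covered by the $1/(k(2k+1))$ factor.

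Next I would apply Theorem~\ref{thm1} with $n=1$ and argument $t=-\tfrac{\ln z}{2\pi}$. The point of choosing the minus sign is that then $e^{-2\pi t}=e^{\ln z}=z$, so the polylogarithm that appears is simply $\Li_2(z)$ and no dilogarithm inversion formula is needed. Theorem~\ref{thm1} gives
\[
P(1,t)=-1-\frac{\pi t}{2}+\ln(2\pi t)+\frac{\zeta(2)}{2\pi t}-\frac{\Li_2(e^{-2\pi t})}{2\pi t},
\]
and now substituting $2\pi t=-\ln z$ (hence $-\tfrac{\pi t}{2}=\tfrac{\ln z}{4}$ and $\ln(2\pi t)=\ln(-\ln z)$), multiplying by $-2\ln z$, and using $2\zeta(2)=\pi^2/3$, collapses everything to $\frac{\pi^2}{3}-\frac12\ln^2 z+2\ln z\big(1-\ln(-\ln z)\big)-2\Li_2(z)$. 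This is a one-line computation with no hidden cancellations.

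I do not expect a real obstacle: the statement is essentially Theorem~\ref{thm1} at $n=1$ rewritten through Euler's formula. The only genuinely delicate points are (i) convergence at the boundary $|\ln z|=2\pi$, handled as above, and (ii) the branch of $\ln(-\ln z)$: for $0<z<1$ one has $-\ln z>0$ and the identity is a bona fide real identity, whereas for $1<z\le e^{2\pi}$ it must be read with principal branches (the same branch issue already latent in $P(1,t)$ for $t<0$); so one may safely state and prove it for $0<z<1$ and note that it continues analytically. An alternative self-contained route would be to redo the integral-of-$S_1$ argument from the first proof of Theorem~\ref{thm1}, but quoting that theorem directly is cleaner.
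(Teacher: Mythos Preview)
Your proposal is correct and follows essentially the same approach as the paper: the paper's proof consists of the single sentence ``Evaluate $P(1,-\frac{\ln z}{2\pi})$ and use $\zeta (2n) = \frac{( - 1)^{n + 1}(2\pi )^{2n} }{2(2n)!}B_{2n}$,'' and your argument is a faithful fleshing-out of exactly that strategy, including the choice of sign in the argument so that $e^{-2\pi t}=z$.
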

\begin{proof}
	Evaluate $P(1,-\frac{\ln z}{2\pi})$ and use
	$\zeta (2n) = \frac{( - 1)^{n + 1}(2\pi )^{2n} }{2(2n)!}B_{2n}, \,\,n\geq1.
	$
\end{proof}

It is instructive to compare \eqref{eq.juxtpm5} with  \cite[Identity (1.76)]{lewin81}:
\[
\Li_2 (e^{ - z} ) = \frac{{\pi ^2 }}{6} + z\ln z - z - \frac{{z^2 }}{4} + \frac{{B_1 z^3 }}{{2 \cdot 3\cdot2!}} - \frac{{B_2 z^5 }}{{4 \cdot 5\cdot4!}} +  \cdots.
\]
\begin{corollary}
	If $z$ is a real number such that $0<| z|\leq2\pi$, then
	\[
	\sum_{k = 1}^\infty  \frac{B_{2k}z^{2k + 1}}{k(2k + 1)!}  =  \frac{4z+z^2}{2} - 2z\ln z - \frac{\pi ^2}{3} + 2\Li_2 (e^{ - z}).
	\]
\end{corollary}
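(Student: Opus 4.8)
The plan is to obtain this corollary as an immediate specialization of the preceding theorem, namely of identity \eqref{eq.juxtpm5}. Concretely, I would perform the substitution $z\mapsto e^{-z}$ throughout \eqref{eq.juxtpm5}. Under this substitution $\ln z$ is replaced by $\ln(e^{-z})=-z$, so, using $(-z)^{2k+1}=-z^{2k+1}$, the left-hand side becomes
\[
\sum_{k=1}^\infty \frac{(-z)^{2k+1} B_{2k}}{k(2k+1)!} = -\sum_{k=1}^\infty \frac{z^{2k+1} B_{2k}}{k(2k+1)!}.
\]

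On the right-hand side of \eqref{eq.juxtpm5}, replacing $\ln z$ by $-z$ turns $-\tfrac12\ln^2 z$ into $-\tfrac12 z^2$, turns $2\ln z\bigl(1-\ln(-\ln z)\bigr)$ into $2(-z)\bigl(1-\ln z\bigr)=-2z+2z\ln z$ (since $-\ln(e^{-z})=z$), and turns $-2\Li_2(z)$ into $-2\Li_2(e^{-z})$; the term $\tfrac{\pi^2}{3}$ is unaffected. Hence \eqref{eq.juxtpm5} becomes
\[
-\sum_{k=1}^\infty \frac{z^{2k+1} B_{2k}}{k(2k+1)!} = \frac{\pi^2}{3} - \frac{z^2}{2} - 2z + 2z\ln z - 2\Li_2(e^{-z}).
\]
Multiplying through by $-1$ and writing $2z+\tfrac12 z^2=\tfrac12(4z+z^2)$ gives exactly the claimed identity.

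The last thing to verify is the range of validity: the hypothesis $0<|\ln z|\le 2\pi$ of \eqref{eq.juxtpm5}, applied with argument $e^{-z}$, reads $0<|-z|\le 2\pi$, i.e.\ $0<|z|\le 2\pi$, which is the stated hypothesis of the corollary. The only minor point requiring attention is the branch of the logarithm appearing in the factor $\ln(-\ln z)$ of \eqref{eq.juxtpm5}; with the principal branch one has $-\ln(e^{-z})=z$, which is what was used above, and for $0<z\le 2\pi$ everything in sight is real and unambiguous. I do not anticipate any genuine obstacle here: the corollary is purely a change of variables in the theorem, and the entire argument is a short computation once the substitution $z\mapsto e^{-z}$ is made.
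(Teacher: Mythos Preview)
Your proposal is correct and follows exactly the paper's own approach: the paper's proof is the single line ``Evaluate \eqref{eq.juxtpm5} at $z=e^{-z}$ with $z>0$,'' and you have simply written out the details of that substitution. Your observation that the identity is cleanly real and unambiguous for $0<z\le 2\pi$ matches the paper's implicit restriction to $z>0$.
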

\begin{proof}
	Evaluate \eqref{eq.juxtpm5} at $z=e^{-z}$ with $z>0$.
\end{proof}

In particular,
\[
\sum_{k = 1}^\infty  \frac{{B_{2k} }}{k(2k + 1)!}  = \frac{5}{2} - \frac{{\pi ^2 }}{3} + 2\Li_2 (e^{ - 1} ).
\]
\begin{corollary}
	For all $z$ such that $0<|z|\le 2\pi$,
	\begin{equation}\label{eq.nbj89nq}
	\sum_{k = 1}^\infty  {\frac{{B_{2k} z^{2k} }}{k(2k)!}}  = 2\ln \left( \frac{2}{z}\sinh\left(\frac{z}{2}\right)\right).
	\end{equation}
\end{corollary}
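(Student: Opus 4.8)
The plan is to reduce the series directly to the function $S_1$ that was evaluated in the first proof of Theorem~\ref{thm1}. Recall Euler's formula $\zeta(2k)=\frac{(-1)^{k+1}(2\pi)^{2k}}{2(2k)!}\,B_{2k}$, valid for $k\ge 1$, which rearranges to $B_{2k}=\frac{2(-1)^{k+1}(2k)!}{(2\pi)^{2k}}\,\zeta(2k)$. Substituting this into the left-hand side of \eqref{eq.nbj89nq} gives
\[
\sum_{k=1}^\infty\frac{B_{2k}z^{2k}}{k(2k)!}=2\sum_{k=1}^\infty\frac{(-1)^{k+1}\zeta(2k)}{k}\Bigl(\frac{z}{2\pi}\Bigr)^{2k}=-2\,S_1\!\Bigl(\frac{z}{2\pi}\Bigr),
\]
where $S_1$ is the series introduced in \eqref{sum_s1}.

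Next I would invoke the closed form for $S_1$ recorded in the first proof of Theorem~\ref{thm1}, namely $S_1(w)=-\ln\bigl(\sinh(\pi w)/(\pi w)\bigr)$. Setting $w=z/(2\pi)$ so that $\pi w=z/2$, this becomes $S_1\bigl(z/(2\pi)\bigr)=-\ln\bigl(2\sinh(z/2)/z\bigr)$, and therefore
\[
\sum_{k=1}^\infty\frac{B_{2k}z^{2k}}{k(2k)!}=-2\,S_1\!\Bigl(\frac{z}{2\pi}\Bigr)=2\ln\!\left(\frac{2}{z}\sinh\!\left(\frac{z}{2}\right)\right),
\]
which is the claimed identity. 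The convergence requirement $|z/(2\pi)|\le 1$ is exactly $|z|\le 2\pi$, matching the hypothesis; on the boundary one appeals, as usual, to Abel's theorem together with continuity of the right-hand side there, the argument of the logarithm staying away from $0$ for the relevant values of $z$.

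There is essentially no serious obstacle here: the proof is a one-line substitution once Euler's formula and the evaluation of $S_1$ are in hand, and the only points demanding minor care are the sign in the $\zeta$–$B$ relation and the elementary simplification $2\sinh(z/2)=e^{z/2}(1-e^{-z})$ needed to match whichever written form of $S_1$ one prefers. As an alternative route, one may differentiate the preceding corollary term by term with respect to $z$: since $\frac{d}{dz}\frac{z^{2k+1}}{(2k+1)!}=\frac{z^{2k}}{(2k)!}$, the left-hand side of \eqref{eq.nbj89nq} is the derivative of $\sum_{k\ge1}\frac{B_{2k}z^{2k+1}}{k(2k+1)!}$, and differentiating its closed form $\frac{4z+z^2}{2}-2z\ln z-\frac{\pi^2}{3}+2\Li_2(e^{-z})$, using $\frac{d}{dz}\Li_2(e^{-z})=\ln(1-e^{-z})$, yields $z-2\ln z+2\ln(1-e^{-z})=2\ln\bigl(\tfrac{2}{z}\sinh(z/2)\bigr)$, the same answer.
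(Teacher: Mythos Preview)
Your argument is correct. The paper's own proof is precisely your alternative route: it differentiates the previous identity \eqref{eq.juxtpm5} with respect to $z$ and then substitutes $e^{-z}$ for $z$, which is the same computation (up to the order of the two operations) as differentiating the preceding corollary as you did.

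Your primary proof, however, is genuinely different and more direct than the paper's. By invoking Euler's formula for $\zeta(2k)$ you recognise the Bernoulli series as $-2\,S_1(z/2\pi)$ and then quote the classical evaluation of $S_1$ already recorded in the proof of Theorem~\ref{thm1}. This bypasses Theorem~\ref{thm1}, its consequence \eqref{eq.juxtpm5}, and the differentiation step entirely; it shows that \eqref{eq.nbj89nq} is nothing but a restatement of the infinite product for $\sinh$. What the paper's route buys, by contrast, is narrative continuity: it derives the corollary as a formal consequence of the preceding results in the section rather than reaching back to an ingredient buried inside an earlier proof.
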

\begin{proof}
	Differentiate \eqref{eq.juxtpm5} with respect to $z$ and write $e^{-z}$ for $z$.
\end{proof}

Differentiating \eqref{eq.nbj89nq} with respect to $z$, in the next corollary we obtain the generating function of even indexed Bernoulli numbers.
\begin{corollary}
	For all real $z$ such that $0<|z|\le 2\pi$,
	\[
	\sum_{k = 0}^\infty  \frac{B_{2k}z^{2k} }{(2k)!}  = \frac{z}{2} + \frac{z}{{e^z  - 1}}.
	\]
\end{corollary}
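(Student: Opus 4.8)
The plan is to prove the identity by differentiating \eqref{eq.nbj89nq} term by term, exactly as the hint suggests. Starting from
\[
\sum_{k=1}^\infty \frac{B_{2k} z^{2k}}{k(2k)!} = 2\ln\!\left(\frac{2}{z}\sinh\!\left(\frac{z}{2}\right)\right),
\]
I would first observe that the power series on the left has radius of convergence $2\pi$ (the nearest singularities of $z/(e^z-1)$ lying at $\pm 2\pi i$), so it converges locally uniformly on the disk $|z|<2\pi$ and may be differentiated termwise there. The derivative of the $k$-th summand is $2k z^{2k-1}/(k(2k)!) = 2z^{2k-1}/(2k)!$, so differentiating the left-hand side produces $\sum_{k=1}^\infty 2B_{2k} z^{2k-1}/(2k)!$.

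For the right-hand side I would write $2\ln\!\bigl(\tfrac{2}{z}\sinh(z/2)\bigr) = 2\ln 2 - 2\ln z + 2\ln\sinh(z/2)$ and differentiate to obtain $-2/z + \coth(z/2)$. Equating the two derivatives and multiplying through by $z/2$ then yields
\[
\sum_{k=1}^\infty \frac{B_{2k} z^{2k}}{(2k)!} = -1 + \frac{z}{2}\coth\!\left(\frac{z}{2}\right).
\]
Adjoining the $k=0$ term, which equals $B_0 = 1$, absorbs the $-1$ and gives $\sum_{k=0}^\infty B_{2k} z^{2k}/(2k)! = \tfrac{z}{2}\coth(z/2)$. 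It then remains only to rewrite this closed form: using $\coth w = (e^{2w}+1)/(e^{2w}-1)$ with $w = z/2$, we get $\coth(z/2) = (e^z+1)/(e^z-1) = 1 + 2/(e^z-1)$, whence
\[
\frac{z}{2}\coth\!\left(\frac{z}{2}\right) = \frac{z}{2} + \frac{z}{e^z-1},
\]
which is exactly the asserted right-hand side.

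There is no genuine obstacle here; the only step needing a word of care is the termwise differentiation, which is legitimate on the open disk $|z|<2\pi$ by the standard theorem on differentiating power series, and that is precisely the region in which the ingredients \eqref{eq.juxtpm5}--\eqref{eq.nbj89nq} are valid. As a sanity check, the result is consistent with the classical generating function $z/(e^z-1) = \sum_{k\ge 0} B_k z^k/k!$: adding $z/2$ cancels the $k=1$ term (since $B_1 = -1/2$), while $B_{2k+1}=0$ for $k\ge 1$, leaving exactly $\sum_{k\ge 0} B_{2k} z^{2k}/(2k)!$.
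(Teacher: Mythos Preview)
Your proof is correct and follows exactly the approach indicated in the paper: differentiate \eqref{eq.nbj89nq} termwise, multiply by $z/2$, adjoin the $k=0$ term, and rewrite $\tfrac{z}{2}\coth(z/2)$ as $\tfrac{z}{2}+\tfrac{z}{e^z-1}$. Your remark about termwise differentiation being valid on the open disk $|z|<2\pi$ is appropriate and in fact slightly more careful than the paper, which records the endpoint $|z|=2\pi$ without comment.
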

\begin{corollary}
	For all $z$ such that $0<|z|\le 1$,
	\begin{equation}\label{eq.y8rpuz3}
	\sum_{k = 1}^\infty \frac{(- 1)^{k-1} \zeta (2k) z^{2k}}{2k + 1} = -\,\frac{1}{2} + \frac{\pi z}{4}  - \frac{\pi}{24z}
	+ \frac{1}{2}\ln\big(\! -2\sinh(\pi z)\big) + \frac{\Li_2 (e^{2\pi z} )}{4\pi z}  .
	\end{equation}
\end{corollary}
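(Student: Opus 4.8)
\noindent\emph{Proof strategy.} The plan is to recognize the series on the left as $-S_2(z)$, where $S_2$ is the auxiliary series $S_2(z)=\sum_{k\ge1}(-1)^{k}\zeta(2k)z^{2k}/(2k+n)$ from the second proof of Theorem~\ref{thm1}, taken at $n=1$. That proof obtained, via the partial fraction $\tfrac{1}{k(2k+n)}=\tfrac1n\bigl(\tfrac1k-\tfrac{2}{2k+n}\bigr)$, the relation $nP(n,z)=S_1(z)-2S_2(z)$; at $n=1$ this reads
\[
\sum_{k=1}^{\infty}\frac{(-1)^{k-1}\zeta(2k)z^{2k}}{2k+1}=-S_2(z)=\frac{1}{2}\bigl(P(1,z)-S_1(z)\bigr),
\]
so the statement is essentially a rewriting of $P(1,z)$. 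First I would substitute the two closed forms already available in the excerpt: the classical $S_1(z)=-\pi z+\ln(2\pi z)-\ln(1-e^{-2\pi z})$ from \eqref{eq.hyjpkxw} (with $y=1$), and the $n=1$ case of \eqref{main_id1},
\[
P(1,z)=-1-\frac{\pi z}{2}+\ln(2\pi z)+\frac{\zeta(2)}{2\pi z}-\frac{\Li_2(e^{-2\pi z})}{2\pi z},
\]
using $\zeta(2)=\pi^{2}/6$, so that $\zeta(2)/(2\pi z)=\pi/(12z)$.

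Carrying out the subtraction, the $\ln(2\pi z)$ contributions cancel and I expect to be left with
\[
\sum_{k=1}^{\infty}\frac{(-1)^{k-1}\zeta(2k)z^{2k}}{2k+1}=-\frac12+\frac{\pi z}{4}+\frac{\pi}{24z}+\frac12\ln\!\bigl(1-e^{-2\pi z}\bigr)-\frac{\Li_2(e^{-2\pi z})}{4\pi z}.
\]
To bring this to the asserted form I would then handle the last two terms separately. For the logarithm, use the factorization $1-e^{-2\pi z}=2e^{-\pi z}\sinh(\pi z)$, which rewrites $\ln(1-e^{-2\pi z})$ in terms of $\ln\!\bigl(-2\sinh(\pi z)\bigr)$ up to an explicit elementary correction. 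For the dilogarithm, apply the inversion (Landen) identity $\Li_2(w)+\Li_2(1/w)=-\tfrac{\pi^{2}}{6}-\tfrac12\ln^{2}(-w)$ with $w=e^{2\pi z}$, which turns $\Li_2(e^{-2\pi z})$ into $-\Li_2(e^{2\pi z})$ plus elementary terms in $z$. Reassembling the pieces should reproduce exactly $-\tfrac12+\tfrac{\pi z}{4}-\tfrac{\pi}{24z}+\tfrac12\ln(-2\sinh(\pi z))+\Li_2(e^{2\pi z})/(4\pi z)$. (An alternative, self-contained route is to write $z^{2k}/(2k+1)=\int_0^1(zt)^{2k}\,dt$ and integrate the generating function $\sum_{k\ge1}(-1)^{k}\zeta(2k)t^{2k}=\tfrac12\bigl(1-\pi t\coth(\pi t)\bigr)$ term by term, but this funnels into the same dilogarithm manipulation.)

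The one genuinely delicate point will be the branch bookkeeping. For real $z\in(0,1]$ both $\ln\!\bigl(-2\sinh(\pi z)\bigr)$ and $\Li_2(e^{2\pi z})$ are evaluated where they are multivalued, so each carries a nonzero imaginary part — with the principal branch of the logarithm and the standard $x+i0$ branch of $\Li_2$ on $(1,\infty)$ one finds $+i\pi/2$ from the half-logarithm and $-i\pi/2$ from the dilogarithmic term after division by $4\pi z$ — and I must check that these cancel, so that the right-hand side is real and matches, term by term, the manifestly real expression obtained in the previous display. Once a consistent branch convention is fixed this cancellation is the crux; the rest is the routine simplification described above. For $z<0$ every quantity involved is already real, so the identity then holds with no further comment, covering the full punctured disc $0<|z|\le1$.
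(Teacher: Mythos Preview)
Your proposal is correct, and in fact more detailed than the paper's own argument, but the method differs. The paper's proof is the single line ``Differentiate $P(1,z)$ with respect to $z$'': since $\frac{d}{dz}\frac{z^{2k}}{k(2k+1)}=\frac{2z^{2k-1}}{2k+1}$, differentiating the closed form \eqref{main_id1} at $n=1$ and multiplying by $-z/2$ gives the left-hand side directly. You instead use the algebraic relation $P(1,z)=S_1(z)-2S_2(z)$ from the second proof of Theorem~\ref{thm1}, obtaining $-S_2(z)=\tfrac12\bigl(P(1,z)-S_1(z)\bigr)$. Both routes land on the same intermediate expression
\[
-\tfrac12+\tfrac{\pi z}{4}+\tfrac{\pi}{24z}+\tfrac12\ln(1-e^{-2\pi z})-\tfrac{\Li_2(e^{-2\pi z})}{4\pi z},
\]
and both then require the sinh factorization and the dilogarithm inversion identity to reach the form stated in \eqref{eq.y8rpuz3}. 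The paper suppresses this last step entirely; you spell it out and, more to the point, you address the branch issue (the imaginary parts $\pm i\pi/2$ that must cancel for real $z>0$), which the paper does not mention at all. Your approach buys a cleaner algebraic derivation that avoids differentiating the $\Li_2$ term, at the cost of invoking two closed forms instead of one; the paper's approach is terser but leaves the reader to discover the inversion step and the branch cancellation.
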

\begin{proof}
	Differentiate $P(1,z)$ with respect to $z$.
\end{proof}
\begin{corollary}
	For all real $z$ such that $0<|z|<1$,
	\begin{equation}\label{Cor12}
	\sum_{k = 1}^\infty  {\frac{{\zeta (2k)z^{2k} }}{{2k + 1}}}  = \frac{1}{2} - \frac{1}{2}\ln\big(2\sin(\pi z)\big) - \frac{\Cl_2(2\pi z)}{4\pi z}.
	\end{equation}
\end{corollary}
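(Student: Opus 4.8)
\noindent The plan is to reduce \eqref{Cor12} to two series that have already been evaluated in the paper, by means of the elementary partial-fraction identity
\[
\frac{1}{2k+1}=\frac12\left(\frac1k-\frac1{k(2k+1)}\right),\qquad k\ge 1 .
\]
Since $\sum_{k\ge1}\zeta(2k)|z|^{2k}/(k(2k+1))<\infty$ and $\sum_{k\ge1}\zeta(2k)|z|^{2k}/k<\infty$ for $|z|<1$ (bound $\zeta(2k)\le\zeta(2)$), the two resulting series converge absolutely and the splitting is legitimate, giving
\[
\sum_{k=1}^\infty\frac{\zeta(2k)z^{2k}}{2k+1}=\frac12\sum_{k=1}^\infty\frac{\zeta(2k)z^{2k}}{k}-\frac12\sum_{k=1}^\infty\frac{\zeta(2k)z^{2k}}{k(2k+1)}.
\]
Before doing this I would observe that the left-hand side of \eqref{Cor12} is an even function of $z$, so it is enough to treat $0<z<1$ (for $-1<z<0$ one replaces $z$ by $|z|$, which is also what the right-hand side intends, $\Cl_2(2\pi z)/(4\pi z)$ being even and $\ln(2\sin\pi z)$ being read as $\ln(2\sin\pi|z|)$).

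For the first series I would use the classical evaluation already recorded in the first proof of Theorem~\ref{thm1}: substituting $z\mapsto iz$ in $S_1(z)=-\ln\big(\sinh(\pi z)/(\pi z)\big)$ and using $\sinh(i\pi z)=i\sin(\pi z)$ yields
\[
\sum_{k=1}^\infty\frac{\zeta(2k)z^{2k}}{k}=-\ln\!\left(\frac{\sin(\pi z)}{\pi z}\right),\qquad 0<z<1
\]
(equivalently, this is $-\ln\prod_{m\ge1}(1-z^2/m^2)$ via the Euler product for the sine, the underlying double sum having nonnegative terms). For the second series I would invoke \eqref{eq.og6kibc}, which for $0<z<1$ states
\[
\sum_{k=1}^\infty\frac{\zeta(2k)z^{2k}}{k(2k+1)}=-1+\ln(2\pi z)+\frac{\Cl_2(2\pi z)}{2\pi z}.
\]

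Substituting both evaluations into the decomposition and simplifying is then mechanical: the term $-\tfrac12\cdot(-1)=\tfrac12$ appears; the logarithmic terms collapse because $\tfrac12\ln(\pi z)-\tfrac12\ln(2\pi z)=-\tfrac12\ln 2$, leaving $-\tfrac12\ln(\sin\pi z)-\tfrac12\ln2=-\tfrac12\ln(2\sin\pi z)$; and the Clausen contribution is $-\tfrac12\cdot\Cl_2(2\pi z)/(2\pi z)=-\Cl_2(2\pi z)/(4\pi z)$, which is exactly \eqref{Cor12}. I do not expect a genuine obstacle: the only points needing care are the reduction to $0<z<1$ (so that $\ln(2\sin\pi z)$, $\ln(2\pi z)$ stay real and the Euler-product evaluation applies on the nose) and the routine justification of the rearrangements, both handled by the absolute-convergence remarks above. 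An alternative route would be to evaluate \eqref{eq.y8rpuz3} directly at $z\mapsto-iz$, using $\sinh(-i\pi z)=-i\sin(\pi z)$ and the Fourier series $\sum_{k\ge1}k^{-2}\cos(2\pi kz)=\pi^2(\tfrac16-z+z^2)$ for $0\le z\le1$ to split $\Li_2(e^{-2\pi iz})$ into its real part and $-i\,\Cl_2(2\pi z)$; the imaginary contributions then cancel and the same answer results, but this path is more computational.
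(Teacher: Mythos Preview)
Your argument is correct. The partial-fraction split $\tfrac{1}{2k+1}=\tfrac12\big(\tfrac1k-\tfrac1{k(2k+1)}\big)$, together with the non-alternating version of $S_1$ and identity \eqref{eq.og6kibc}, indeed combines to \eqref{Cor12}; your handling of absolute convergence and the reduction to $0<z<1$ are appropriate.

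This is, however, not how the paper proceeds. The paper's proof is a single substitution: replace $z$ by $iz$ in \eqref{eq.y8rpuz3} and separate real and imaginary parts (this is precisely what you sketch as your ``alternative route'' at the end). That path is shorter because \eqref{eq.y8rpuz3} has just been established, but it requires handling the branch of $\ln(-2\sinh(\pi i z))$ and splitting $\Li_2(e^{2\pi i z})$ into its real part and $\Cl_2(2\pi z)$. Your main route sidesteps those complex-analysis bookkeeping steps entirely by drawing on two earlier, purely real identities; it is a bit longer but arguably cleaner and more modular.
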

\begin{proof}
	Write $iz$ for $z$ in \eqref{eq.y8rpuz3} and take the imaginary part.
\end{proof}

Note that Formula \eqref{Cor12} is given in a slightly different way in \cite[Formula (490)]{SriChoi12}. Also, one can find it in \cite[Formula (54.5.4)]{hansen} and \cite[Formula (2.18)]{wu}.
\begin{example} Taking $z=1/2$, $z=1/4$ and $z=3/4$ in \eqref{Cor12} yield the following identities:
	\begin{align}\label{eq.bruala5}
	\sum_{k = 1}^\infty  {\frac{{\zeta (2k)}}{{4^{k} (2k + 1)}}}  &= \frac{1}{2} - \frac{\ln 2}{2},\\
	\label{2.32}\sum_{k = 1}^\infty  {\frac{{\zeta (2k)}}{{16^{k} (2k + 1)}}}  &= \frac{1}{2} - \frac{\ln 2}{4} - \frac{G}{\pi },\\
	\label{2.33}\sum_{k = 1}^\infty \Bigl(\frac{9}{16}\Bigr)^{k}\frac{\zeta (2k)}{2k + 1}&  = \frac{1}{2} - \frac{\ln 2}{4} +  \frac{G}{3\pi}.
	\end{align}
\end{example}

Identity \eqref{eq.bruala5} is found in \cite[p.~313, Formula (493)]{SriChoi12} and \cite{orr}.  One can find  \eqref{2.32} and \eqref{2.33}  in \cite[Formulas (670) and (671)]{SriChoi12}.
\begin{theorem}
	For all $z$ such that $0<|z|\le 1$,
	\begin{equation}\label{eq.i63cmw9}
	\sum_{k = 1}^\infty  {\frac{{( - 1)^{k-1} \zeta (2k)z^{2k} }}{{(2k + 1)(k + 1)}}}  = -\frac12 - \frac{{\pi z }}{6} -  \frac{\pi}{12z} - \frac{{\zeta (3)}}{2\pi ^2z^2 }  + \frac{{\Li_3 (e^{2\pi z} )}}{{2\pi ^2z^2 }} - \frac{{\Li_2 (e^{2\pi z} )}}{2\pi z}.
	\end{equation}
\end{theorem}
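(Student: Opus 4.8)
The plan is to reduce the series to the two quantities $P(1,z)$ and $P(2,z)$ already evaluated in Theorem~\ref{thm1}, and then to pass from the arguments $e^{-2\pi z}$ to the arguments $e^{2\pi z}$ by means of the polylogarithm inversion formulas. The starting point is the partial fraction decomposition
\[
\frac{1}{(2k+1)(k+1)}=\frac1k\Big(\frac{1}{k+1}-\frac{1}{2k+1}\Big)=-\frac{1}{k(2k+1)}+\frac{1}{k(k+1)},
\]
together with $\frac{1}{k(k+1)}=\frac{2}{k(2k+2)}$ and $(-1)^{k-1}=-(-1)^k$. Multiplying by $(-1)^{k-1}\zeta(2k)z^{2k}$ and summing over $k\ge 1$ (the rearrangement is harmless since the series converges absolutely for $0<|z|\le 1$) gives
\[
\sum_{k=1}^\infty\frac{(-1)^{k-1}\zeta(2k)z^{2k}}{(2k+1)(k+1)}=P(1,z)-2P(2,z).
\]

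Next I would substitute the closed forms from \eqref{main_id1} with $n=1$ and $n=2$. In that difference the two copies of $\ln(2\pi z)$ cancel; using $\zeta(2)=\pi^2/6$ and collecting terms one arrives at
\[
P(1,z)-2P(2,z)=-\frac12+\frac{\pi z}{6}+\frac{\pi}{12z}-\frac{\zeta(3)}{2\pi^2z^2}+\frac{\Li_2(e^{-2\pi z})}{2\pi z}+\frac{\Li_3(e^{-2\pi z})}{2\pi^2z^2}.
\]
It then remains only to recognise that the right-hand side of \eqref{eq.i63cmw9} is another writing of this expression; equating the two is equivalent to the single relation
\[
\frac{\Li_3(e^{2\pi z})-\Li_3(e^{-2\pi z})}{2\pi^2z^2}-\frac{\Li_2(e^{2\pi z})+\Li_2(e^{-2\pi z})}{2\pi z}=\frac{\pi z}{3}+\frac{\pi}{6z}.
\]

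To establish the latter I would invoke the reflection (inversion) formulas
\[
\Li_2(w)+\Li_2(1/w)=-\frac{\pi^2}{6}-\frac12\ln^2(-w),\qquad
\Li_3(w)-\Li_3(1/w)=-\frac16\ln^3(-w)-\frac{\pi^2}{6}\ln(-w),
\]
taken at $w=e^{2\pi z}$, so that $1/w=e^{-2\pi z}$ and $\ln(-w)=2\pi z+i\pi$ for a fixed determination. Expanding $\ln^2(-w)$ and $\ln^3(-w)$, dividing by $2\pi z$ and $2\pi^2z^2$ respectively, and adding, the terms carrying $i\pi$ combine into a single purely imaginary quantity which cancels, while the $z^3$- and $z$-terms collapse to $\frac{\pi z}{3}+\frac{\pi}{6z}$, and the relation follows. (Alternatively one may feed \eqref{eq.y8rpuz3} in for the $\tfrac{1}{2k+1}$ piece, which already carries $\Li_2(e^{2\pi z})$, so that only the trilogarithm inversion is needed; or one may stay entirely with $e^{-2\pi z}$ and regard the displayed $P(1,z)-2P(2,z)$ formula as the answer.)

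The one place that requires care is this last step: the bookkeeping of branches in the inversion formulas. One must fix the determination of $\ln(-e^{2\pi z})$ once and for all and check that with this choice the imaginary parts produced by $\Li_2(e^{2\pi z})/(2\pi z)$ and $\Li_3(e^{2\pi z})/(2\pi^2z^2)$ are equal and opposite (so that the right-hand side of \eqref{eq.i63cmw9} is indeed real when $z$ is real), and that the surviving polynomial remainder is exactly $-\frac{\pi z}{6}-\frac{\pi}{12z}$. Everything else — the partial fractions, the substitution of Theorem~\ref{thm1}, and the collection of terms — is entirely mechanical.
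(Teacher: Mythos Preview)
Your argument is correct, but it is not the route taken in the paper. The paper's proof is a one-liner: multiply identity~\eqref{eq.y8rpuz3} through by $z$ and integrate termwise; since \eqref{eq.y8rpuz3} already carries $\Li_2(e^{2\pi z})$, the integration produces the $\Li_3(e^{2\pi z})$ term directly and no inversion step is needed at this stage. Your approach instead goes back to Theorem~\ref{thm1}, uses the partial-fraction identity $\frac{1}{(2k+1)(k+1)}=\frac{2}{k(2k+2)}-\frac{1}{k(2k+1)}$ to write the series as $P(1,z)-2P(2,z)$, and only then converts $e^{-2\pi z}$ to $e^{2\pi z}$ via the $\Li_2$ and $\Li_3$ inversion formulas.

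The trade-off is this: the paper's integration trick is shorter but relies on \eqref{eq.y8rpuz3}, whose own derivation (``differentiate $P(1,z)$'') already hides exactly the $\Li_2$-inversion you carry out explicitly---that is where the $e^{2\pi z}$ and the $\ln(-2\sinh(\pi z))$ first enter. Your method is more transparent in that it exposes the branch bookkeeping as the only non-mechanical step, and it bypasses the intermediate corollary entirely. Your verification of the cancellation of imaginary parts (with the determination $\ln(-e^{2\pi z})=2\pi z+i\pi$) is correct and, incidentally, matches the branch convention implicit in the paper's formulas.
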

\begin{proof}
	Multiply through \eqref{eq.y8rpuz3} by $z$ and integrate with respect to $z$.
\end{proof}
\begin{corollary}
	For all real $z$ such that $0<|z|\le 1$,
	\begin{equation}\label{Cor14}
	\sum_{k = 1}^\infty  {\frac{{\zeta (2k)z^{2k} }}{{(k + 1)(2k + 1)}}}  = \frac{1}{2}  - \frac{{\zeta (3)}}{2\pi ^2 z^2 } + \frac{{\Cl_3 (2\pi z)}}{2\pi ^2 z^2 } + \frac{{\Cl_2 (2\pi z)}}{2\pi z}.
	\end{equation}
\end{corollary}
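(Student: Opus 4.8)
The plan is to derive \eqref{Cor14} directly from \eqref{eq.i63cmw9} by the substitution $z\mapsto iz$, with $z$ now real and $0<|z|\le1$, followed by a separation into real and imaginary parts. Since $|iz|=|z|\le1$, the hypothesis of the theorem giving \eqref{eq.i63cmw9} is met, so the manipulation is legitimate.

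First I would replace $z$ by $iz$ throughout \eqref{eq.i63cmw9}. On the left-hand side $(iz)^{2k}=(-1)^kz^{2k}$, so the factor $(-1)^{k-1}(iz)^{2k}$ collapses to $-z^{2k}$ and the series becomes $-\sum_{k\ge1}\zeta(2k)z^{2k}/\big((2k+1)(k+1)\big)$, which is real for real $z$. On the right-hand side I would use $(iz)^2=-z^2$ and $1/(iz)=-i/z$; the key structural point is that every occurrence of $e^{2\pi z}$ becomes $e^{2\pi i z}$, which lies on the unit circle, so no analytic continuation of the polylogarithm is involved. After these simplifications the right-hand side takes the form $-\tfrac12-\tfrac{i\pi z}{6}+\tfrac{i\pi}{12z}+\tfrac{\zeta(3)}{2\pi^2z^2}-\tfrac{\Li_3(e^{2\pi i z})}{2\pi^2z^2}+\tfrac{i\,\Li_2(e^{2\pi i z})}{2\pi z}$.

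Next I would take real parts. By the parity convention in \eqref{Cl_def}, $\Re\,\Li_3(e^{2\pi i z})=\Cl_3(2\pi z)$, while $\Cl_2(2\pi z)=\Im\,\Li_2(e^{2\pi i z})$, so that $\Re\big(i\,\Li_2(e^{2\pi i z})\big)=-\Cl_2(2\pi z)$; the two purely imaginary terms $-i\pi z/6$ and $i\pi/(12z)$ contribute nothing. Collecting the surviving terms gives $-\sum_{k\ge1}\zeta(2k)z^{2k}/\big((2k+1)(k+1)\big)=-\tfrac12+\tfrac{\zeta(3)}{2\pi^2z^2}-\tfrac{\Cl_3(2\pi z)}{2\pi^2z^2}-\tfrac{\Cl_2(2\pi z)}{2\pi z}$, and multiplying through by $-1$ is precisely \eqref{Cor14}.

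Because every step is a direct manipulation of an already-proved identity, there is no genuine obstacle; the only care required is bookkeeping with the powers of $i$ — in particular the sign contributed by $1/(iz)=-i/z$ interacting with the leading minus signs in \eqref{eq.i63cmw9} — together with correctly assigning each polylogarithm to its real or imaginary Clausen component according to the even/odd rule in \eqref{Cl_def}. As a consistency check, equating imaginary parts produces $0$ on the left and hence a companion (Glaisher-type) identity for the series $\sum\cos(2\pi kz)/k^2$ and $\sum\sin(2\pi kz)/k^3$ on the right; this is not needed for \eqref{Cor14} but could be recorded as a remark.
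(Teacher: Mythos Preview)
Your proof is correct and follows exactly the approach indicated in the paper: substitute $iz$ for $z$ in \eqref{eq.i63cmw9} and take real parts. Your bookkeeping with the powers of $i$ and the assignment of $\Li_2$, $\Li_3$ to the corresponding Clausen functions via \eqref{Cl_def} is accurate.
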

\begin{proof}
	Write $iz$ for $z$ in \eqref{eq.i63cmw9} and take real parts.
\end{proof}
\begin{example} For certain $z$, from \eqref{Cor14} we have
	\begin{align}
	\sum_{k = 1}^\infty  \frac{{\zeta (2k)}}{{(k + 1)(2k + 1)}}  &= \frac12,\nonumber\\
	\label{Euler}
	\sum_{k = 1}^\infty  \frac{{\zeta (2k)}}{{4^{k} (k + 1)(2k + 1)}}  &=  \frac12 - \frac{7\zeta (3)}{2\pi ^2 } ,\\
	\label{16^k}\sum_{k = 1}^\infty  {\frac{{\zeta (2k)}}{16^{k} (k + 1)(2k + 1)}} & =  \frac12 - \frac{35\zeta (3)}{4\pi ^2 } + \frac{2G}{\pi },\\
	\sum_{k = 1}^\infty  \Bigl(\frac{9}{16}\Bigr)^k \frac{\zeta (2k)}{(k+1)(2k + 1)} & =  \frac{1}{2} - \frac{35\zeta (3)}{36\pi ^2} -  \frac{2G}{3\pi},\nonumber\\
	\sum_{k = 1}^\infty  \frac{\zeta (2k)}{36^k(k+1)(2k + 1)} & =  \frac{1}{2} -\frac{\pi}{\sqrt3}- \frac{12\zeta (3)}{\pi ^2} + \frac{\sqrt3}{2\pi} \,\psi'\Big(\frac13\Big). \nonumber
	\end{align}
\end{example}

The series representation \eqref{Euler} is contained in one of Euler's papers and was rediscovered by many mathematicians (see \cite{Ewell} and \cite{zhang1} among others). Identity \eqref{16^k} is also known (\cite[Formula (5.10)]{Adam}, \cite[Formula (16)]{lupu}).

\section{Connection with second order sequences}

In this section we study some series involving the  Riemann zeta function and Fibonacci (Lucas) numbers.
The results are closely related to our studies in  \cite{KA-NNTDM,Fro_NNTDM,MatStud}.

As usual, let $F_n$ and $L_n$ denote the $n$-th Fibonacci and Lucas numbers, both satisfying the recurrence $w_n = w_{n-1} + w_{n-2}$ for $n \geq 2$, but with the initial values $F_0 = 0$, $F_1 = 1$ and $L_0 = 2$, $L_1 = 1$, respectively. The Binet formulas are
\begin{equation}\label{bine}
F_n = \frac{\alpha^n-\beta^n}{\alpha-\beta}, \quad L_n = \alpha^n + \beta^n,\quad n\geq0,
\end{equation}
where $\alpha = {(1+\sqrt{5})}/{2}$ and $\beta=-{1}/{\alpha}={(1-\sqrt{5})}/{2}$.
\begin{theorem}
	If $n$ is a positive integer and $z$ is any real number such that 	$0<|z|\leq1$, then
	\begin{align}\label{th15_F}
	\sqrt5\sum_{k = 1}^\infty   \frac{(-1)^{k-1}F_{2k}\zeta (2k)z^{2k}} {k(2k+n)} &= \frac{\pi z}{n+1}-\frac{2\ln\alpha}{n}+\frac{2(n-1)!\sinh(n\ln\alpha)}{(2\pi z)^n}\zeta(n+1)\nonumber\\
	& -(n-1)!\sum_{j=1}^{n}\frac{\alpha^{j}\Li_{j+1}\!\big(e^{2\pi\beta z}\big)-(-\beta)^{j}\Li_{j+1}\!\big(e^{-2\pi\alpha z}\big)}{(n-j)!(2\pi z)^{j}},\\
	\label{th15_L}
	\sum_{k = 1}^\infty  \frac{(-1)^{k-1}L_{2k}\zeta (2k)z^{2k}} {k(2k+n)}&= \frac{\sqrt5\pi z}{n+1} -\frac{2\ln(2\pi z)}{n}-\frac{2(n-1)!\cosh(n\ln\alpha)}{(2\pi z)^n}\zeta(n+1)\nonumber\\
	&\hspace{-1cm}+\frac{2}{n^2} +(n-1)!\sum_{j=1}^{n}\frac{\alpha^{j}\Li_{j+1}\!\big(e^{2\pi\beta z}\big)+(-\beta)^{j}\Li_{j+1}\!\big(e^{-2\pi\alpha z}\big)}{(n-j)!(2\pi z)^{j}}.
	\end{align}
\end{theorem}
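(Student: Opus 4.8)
The plan is to reduce both identities to Theorem~\ref{thm1} via the Binet formulas \eqref{bine}. Only \emph{even}-indexed terms occur, and $\sqrt5\,F_{2k}=\alpha^{2k}-\beta^{2k}$, $L_{2k}=\alpha^{2k}+\beta^{2k}$; since $\alpha\beta=-1$ we have $\beta^{2k}z^{2k}=(z/\alpha)^{2k}$. Both series are even in $z$, so we may take $z>0$, and then, reading off the series defining $P(n,\cdot)$ in Theorem~\ref{thm1},
\begin{align*}
\sqrt5\sum_{k=1}^\infty \frac{(-1)^{k-1}F_{2k}\,\zeta(2k)z^{2k}}{k(2k+n)} &= P\!\left(n,\tfrac{z}{\alpha}\right)-P(n,\alpha z),\\
\sum_{k=1}^\infty \frac{(-1)^{k-1}L_{2k}\,\zeta(2k)z^{2k}}{k(2k+n)} &= -P\!\left(n,\tfrac{z}{\alpha}\right)-P(n,\alpha z).
\end{align*}

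Next I would insert the closed form \eqref{main_id1} for each of $P(n,z/\alpha)$ and $P(n,\alpha z)$ and collect like terms. The constant $-1/n^{2}$ cancels for the Fibonacci combination and doubles to $2/n^{2}$ for the Lucas one; the logarithms combine through $\ln(2\pi z/\alpha)=\ln(2\pi z)-\ln\alpha$ and $\ln(2\pi\alpha z)=\ln(2\pi z)+\ln\alpha$; the terms linear in $\pi z$ collapse by the golden-ratio relations $\alpha-\alpha^{-1}=1$ and $\alpha+\alpha^{-1}=\sqrt5$; the $\zeta(n+1)$ contributions assemble into $\alpha^{n}-\alpha^{-n}=2\sinh(n\ln\alpha)$ and $\alpha^{n}+\alpha^{-n}=2\cosh(n\ln\alpha)$; and the two polylogarithm sums merge once one notes that $\alpha^{-j}=(-\beta)^{j}$ together with $e^{-2\pi z/\alpha}=e^{2\pi\beta z}$, which turns the $j$-th numerator into $\alpha^{j}\Li_{j+1}(e^{2\pi\beta z})\mp(-\beta)^{j}\Li_{j+1}(e^{-2\pi\alpha z})$, reproducing \eqref{th15_F} and \eqref{th15_L}. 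This part is purely mechanical and I would not belabour it.

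The step requiring real care is the range of $z$: to invoke Theorem~\ref{thm1} for $P(n,\alpha z)$ one needs $|\alpha z|\le 1$, i.e. $|z|\le 1/\alpha$, and in fact the left-hand series — whose $k$-th term has size $\alpha^{2k}|z|^{2k}/k^{2}$ — converges precisely for $|z|\le 1/\alpha$. So the computation directly yields \eqref{th15_F}--\eqref{th15_L} on $0<|z|\le 1/\alpha$. For real $z>0$ the right-hand sides stay real-analytic on $(0,1]$ (the arguments $e^{2\pi\beta z}$ and $e^{-2\pi\alpha z}$ never leave the unit disc), so the identities persist on the full stated range $0<|z|\le1$ by analytic continuation of the closed-form side. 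I expect this range/continuation point — rather than the golden-ratio algebra, which is routine — to be the only genuine obstacle, and it is worth stating carefully which of the two arguments $z/\alpha$, $\alpha z$ is the ``large'' one when the theorems are applied.
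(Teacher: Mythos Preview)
Your approach is exactly the paper's: evaluate $P(n,\alpha z)$ and $P(n,-\beta z)$ (your $P(n,z/\alpha)$, since $-\beta=1/\alpha$ and only even powers appear) and combine via the Binet formulas. The golden-ratio bookkeeping you sketch is correct and matches what the one-line proof in the paper leaves to the reader.

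Where you go astray is the last paragraph. You correctly observe that invoking Theorem~\ref{thm1} at $\alpha z$ forces $|z|\le 1/\alpha$, and indeed the left-hand series has general term of order $(\alpha|z|)^{2k}/k^{2}$ and \emph{diverges} for $1/\alpha<|z|\le 1$. Analytic continuation of the right-hand side cannot repair this: an identity ``series $=$ closed form'' has no content where the series fails to converge. So your continuation argument does not extend the result to $0<|z|\le 1$. In fact the hypothesis $0<|z|\le 1$ in the statement is a slip; the companion Fibonacci/Lucas theorems immediately following in the paper all carry the correct range $0<|z|\le 1/\alpha$, and that is what your computation actually proves.
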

\begin{proof}
	Evaluate  $P(n,\alpha z)$ and $P(n,-\beta z)$  and combine these equations according to \eqref{main_id1} and the Binet formulas \eqref{bine}.
\end{proof}
\begin{theorem}
	If $z$ is any real number such that $0<|z|\leq {1}/{\alpha}$, then
	\begin{align}
	\label{th16_F}
		\sum_{k = 1}^\infty \frac{F_{2k}\zeta (2k)z^{2k}} {k(2k+1)} &= \frac{2\ln\alpha}{\sqrt5} -\frac{\beta\Cl_2(2\pi\alpha z)- \alpha\Cl_2(2\pi\beta z)}{2\sqrt5\pi z},\\
	\label{th16_L}
	\sum_{k = 1}^\infty \frac{L_{2k}\zeta (2k)z^{2k}} { k(2k+1)} & = -2+ 2\ln(2\pi z)-\frac{\beta\Cl_2(2\pi\alpha z)+\alpha\Cl_2(2\pi\beta z)}{2\pi z }.
	\end{align}
\end{theorem}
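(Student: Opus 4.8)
The plan is to obtain \eqref{th16_F} and \eqref{th16_L} from the Clausen-valued evaluation \eqref{eq.og6kibc} by forming the linear combinations prescribed by the Binet formulas \eqref{bine}. Write $\displaystyle f(w)=\sum_{k=1}^{\infty}\frac{\zeta(2k)w^{2k}}{k(2k+1)}$, so that \eqref{eq.og6kibc} says $f(w)=-1+\ln(2\pi|w|)+\Cl_2(2\pi w)/(2\pi w)$ whenever $0<|w|\le1$. Because $F_{2k}=(\alpha^{2k}-\beta^{2k})/\sqrt5$ and $L_{2k}=\alpha^{2k}+\beta^{2k}$, absorbing $\alpha^{2k}$ and $\beta^{2k}$ into $z^{2k}$ gives
\[
\sqrt5\sum_{k=1}^{\infty}\frac{F_{2k}\zeta(2k)z^{2k}}{k(2k+1)}=f(\alpha z)-f(\beta z),
\qquad
\sum_{k=1}^{\infty}\frac{L_{2k}\zeta(2k)z^{2k}}{k(2k+1)}=f(\alpha z)+f(\beta z).
\]
For these to be valid termwise I need \eqref{eq.og6kibc} applicable at $w=\alpha z$ and at $w=\beta z$; since $|\beta|=1/\alpha<\alpha$, the binding condition is $|\alpha z|\le1$, i.e.\ $|z|\le1/\alpha$, which is precisely why the hypothesis here is tighter than the $|z|\le1$ of the preceding theorem.

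The remaining work is elementary manipulation with $\alpha$ and $\beta$. Substituting \eqref{eq.og6kibc} at $w=\alpha z$ and $w=\beta z$, the logarithms combine via $\alpha\beta=-1$: one has $\ln(2\pi|\alpha z|)+\ln(2\pi|\beta z|)=\ln\big((2\pi|z|)^2\big)=2\ln(2\pi|z|)$ in the Lucas (sum) case, while $|\alpha z|/|\beta z|=\alpha/|\beta|=\alpha^2$ yields $\ln(2\pi|\alpha z|)-\ln(2\pi|\beta z|)=2\ln\alpha$ in the Fibonacci (difference) case; the two copies of $-1$ add to $-2$ or cancel. For the Clausen terms I use $1/\alpha=-\beta$ and $1/\beta=-\alpha$ to write $\Cl_2(2\pi\alpha z)/(2\pi\alpha z)=-\beta\,\Cl_2(2\pi\alpha z)/(2\pi z)$ and $\Cl_2(2\pi\beta z)/(2\pi\beta z)=-\alpha\,\Cl_2(2\pi\beta z)/(2\pi z)$. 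Collecting everything, and dividing by $\sqrt5$ in the Fibonacci case, reproduces the right-hand sides of \eqref{th16_F} and \eqref{th16_L} (the Lucas formula actually emerges with $2\ln(2\pi|z|)$, matching \eqref{th16_L} for $z>0$).

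I do not anticipate a real obstacle. The two points that need care are the domain restriction just discussed, which I would state at the outset, and the sign bookkeeping: since $\beta<0$ one has $|\beta z|=|z|/\alpha$, and the factor $1/\beta=-\alpha$ injects a minus sign into the Clausen coefficient, so tracking this correctly is exactly what produces the combinations $\beta\Cl_2(2\pi\alpha z)\mp\alpha\Cl_2(2\pi\beta z)$ rather than the naive ones. An alternative route, matching the style used earlier in the paper, would be to specialize \eqref{th15_F} and \eqref{th15_L} to $n=1$, simplify using $\sinh(\ln\alpha)=1/2$, $\cosh(\ln\alpha)=\sqrt5/2$ and $\zeta(2)=\pi^2/6$, then replace $z$ by $-iz$ so that $(-1)^{k-1}z^{2k}\mapsto-z^{2k}$ and the polylogarithms acquire unimodular arguments, and finally take real parts using $\Cl_2(x)=\Im\,\Li_2(e^{ix})$; this reaches the same formulas through somewhat heavier complex bookkeeping.
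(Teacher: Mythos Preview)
Your proposal is correct and is essentially the paper's own proof: the paper's instruction ``Evaluate $P(1,i\alpha z)$ and $P(1,-i\beta z)$ and combine according to \eqref{eq.og6kibc} and the Binet formulas'' amounts precisely to applying \eqref{eq.og6kibc} at $w=\alpha z$ and $w=\beta z$ and forming $f(\alpha z)\pm f(\beta z)$, which is exactly what you do. Your observation that the Lucas identity really comes out with $2\ln(2\pi|z|)$ (so that \eqref{th16_L} as written assumes $z>0$) is a valid and slightly sharper remark than what the paper records.
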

\begin{proof}
	Evaluate  $P(1,i\alpha z)$ and $P(1,-i\beta z)$  and combine these equations according to \eqref{eq.og6kibc} and the Binet formulas \eqref{bine}.
\end{proof}
\begin{example} 	When $n=1$ and $z=1/2$, then from \eqref{th15_F}--\eqref{th16_L} we get the expressions
	\begin{align*}
	\sqrt5\sum_{k = 1}^\infty \frac{(-1)^{k-1}F_{2k}\zeta (2k)} {4^k k(2k+1)} &= \frac{\pi}{4}-2\ln\alpha+\frac{\pi\sinh(\ln\alpha)}{3} -\frac{\alpha\Li_{2}(e^{\pi\beta})+\beta\Li_{2}(e^{-\pi\alpha})}{\pi},
	\end{align*}
	\begin{align*}
	\sum_{k = 1}^\infty  \frac{(-1)^{k-1}L_{2k}\zeta (2k)} {4^k k(2k+1)}& =\frac{8+\pi\sqrt5}{4}-2\ln\pi - \frac{\pi\cosh(\ln\alpha)}{3}+\frac{\alpha\Li_{2}(e^{\pi\beta}) - \beta\Li_{2}(e^{-\pi\alpha})}{\pi},\\
	\sum_{k = 1}^\infty \frac{ F_{2k}\zeta (2k)} {4^k k(2k+1)} &= \frac{2\ln\alpha}{\sqrt5} -\frac{\beta\Cl_{2}(\pi\alpha)-\alpha\Cl_{2}(\pi\beta)}{\sqrt5\pi},\\
	\sum_{k = 1}^\infty \frac{L_{2k}\zeta (2k)} {4^k k(2k+1)} &= 2\ln\pi -2 -  \frac{ \beta\Cl_{2}(\pi\alpha) + \alpha\Cl_{2}(\pi\beta)}{\pi}.
	\end{align*}
\end{example}
\begin{theorem}
	If $n$ is a positive integer and $z$ is any real number such that  $0<|z|\leq {1}/{\alpha}$, then
	\begin{align*}
	\frac{\sqrt5}{2}	\sum_{k = 1}^\infty  \frac{F_{2k}\zeta(2k)z^{2k}}{k(k+n)} &= \frac{\ln\alpha}{n} -\frac{(-1)^n\sqrt5(2n-1)!F_{2n}\zeta(2n+1)}{(2\pi z)^{2n}}\\
	&\,\quad-(2n-1)!\sum_{j=1}^n (-1)^j\frac{\beta^{2j}\Cl_{2j+1}(2\pi\alpha z) -  \alpha^{2j}\Cl_{2j+1}(2\pi\beta z)}{(2n-2j)!(2\pi z)^{2j}}\\
	&\,\quad+(2n-1)!\sum_{j=1}^n (-1)^j \frac{\beta^{2j-1}\Cl_{2j}(2\pi\alpha z) - \alpha^{2j-1}\Cl_{2j}(2\pi\beta z)}{(2n+1-2j)!(2\pi z)^{2j-1}},\\
	\frac{1}{2}	\sum_{k = 1}^\infty  \frac{L_{2k}\zeta(2k)z^{2k}}{k(k+n)} &= -\frac{1}{2n^2}+\frac{\ln(2\pi z)}{n} -\frac{(-1)^n(2n-1)!L_{2n}\zeta(2n+1)}{(2\pi z)^{2n}}\\
	&\,\quad-(2n-1)!\sum_{j=1}^n\frac{(-1)^j\big(\beta^{2j}\Cl_{2j+1}(2\pi\alpha z) + \alpha^{2j}\Cl_{2j+1}(2\pi\beta z)\big)}{(2n-2j)!(2\pi z)^{2j}}\\
	&\,\quad+(2n-1)!\sum_{j=1}^n\frac{(-1)^j\big(\beta^{2j-1}\Cl_{2j}(2\pi\alpha z) + \alpha^{2j-1}\Cl_{2j}(2\pi\beta z)\big)}{(2n+1-2j)!(2\pi z)^{2j-1}}.
	\end{align*}
\end{theorem}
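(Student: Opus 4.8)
\emph{Proof proposal.} The plan is to deduce both identities from the closed form \eqref{eq.mag6ovt} for $\sum_{k\ge1}\zeta(2k)w^{2k}/(k(k+n))$ (equivalently, from the evaluation of $2P(2n,-iw)$) by specializing its argument to $\alpha z$ and to $\beta z$ and folding the two expressions together via the Binet formulas \eqref{bine}. Since $\alpha^{2k}$ and $\beta^{2k}$ are perfect squares, the Fibonacci and Lucas weights can be absorbed into the argument: writing $G(w)=\sum_{k\ge1}\zeta(2k)w^{2k}/(k(k+n))$, the relations $\sqrt5\,F_{2k}=\alpha^{2k}-\beta^{2k}$ and $L_{2k}=\alpha^{2k}+\beta^{2k}$ give
\[
\frac{\sqrt5}{2}\sum_{k\ge1}\frac{F_{2k}\zeta(2k)z^{2k}}{k(k+n)}=\frac12\bigl(G(\alpha z)-G(\beta z)\bigr),\qquad
\frac12\sum_{k\ge1}\frac{L_{2k}\zeta(2k)z^{2k}}{k(k+n)}=\frac12\bigl(G(\alpha z)+G(\beta z)\bigr).
\]
The hypothesis $0<|z|\le1/\alpha$ is exactly what makes $|\alpha z|\le1$ (and then automatically $|\beta z|=|z|/\alpha\le1$), so that \eqref{eq.mag6ovt} applies to both $G(\alpha z)$ and $G(\beta z)$; absolute convergence of the Fibonacci and Lucas series on the whole range, endpoint included, follows at once since $\zeta(2k)$ is bounded and $|\beta/\alpha|<1$.

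I would then insert \eqref{eq.mag6ovt} for $G(\alpha z)$ and $G(\beta z)$ and collect like terms, using throughout the two elementary consequences of $\alpha\beta=-1$: $\alpha^{-2j}=\beta^{2j}$ and $\beta^{-2j}=\alpha^{2j}$ for even exponents, and $\alpha^{-(2j-1)}=-\beta^{2j-1}$ and $\beta^{-(2j-1)}=-\alpha^{2j-1}$ for odd ones. The constant $-\tfrac1{2n^2}$ cancels in the Fibonacci combination and survives as $-\tfrac1{2n^2}$ in the Lucas one. In the $\zeta(2n+1)$ term, after factoring out $(2\pi z)^{-2n}$, the reciprocal powers $(2\pi\alpha z)^{-2n}$ and $(2\pi\beta z)^{-2n}$ contribute $\alpha^{-2n}\mp\beta^{-2n}=\beta^{2n}\mp\alpha^{2n}$, which is a multiple of $F_{2n}$, respectively $L_{2n}$, by the Binet formulas. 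The logarithmic terms combine through $\ln(2\pi\alpha z)\mp\ln(2\pi|\beta z|)$: the difference equals $2\ln\alpha$ (since $\alpha/|\beta|=\alpha^{2}$), yielding $(\ln\alpha)/n$ for Fibonacci, while the sum equals $2\ln(2\pi z)$ (since $\alpha|\beta|=1$), yielding $(\ln(2\pi z))/n$ for Lucas, once the $\tfrac1n$ from \eqref{eq.mag6ovt} and the overall factor $\tfrac12$ are accounted for. Finally, splitting each Clausen sum in \eqref{eq.mag6ovt} into its $\Cl_{2j+1}$-part (carrying the factorial $(2n-2j)!$) and its $\Cl_{2j}$-part (carrying $(2n+1-2j)!$) and recombining under the parity relations above reproduces exactly the two Clausen combinations $\beta^{2j}\Cl_{2j+1}(2\pi\alpha z)\mp\alpha^{2j}\Cl_{2j+1}(2\pi\beta z)$ and $\beta^{2j-1}\Cl_{2j}(2\pi\alpha z)\mp\alpha^{2j-1}\Cl_{2j}(2\pi\beta z)$ appearing in the statement.

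The one genuinely delicate point---and where I expect the main obstacle to lie---is the meaning of \eqref{eq.mag6ovt} at the negative real argument $\beta z$ (take $z>0$): the series $G(w)$ is real and even in $w$, whereas the right-hand side of \eqref{eq.mag6ovt} as written carries $\ln(2\pi w)$, which is not real for $w<0$. There are two equivalent ways around this. First, one could apply \eqref{eq.mag6ovt} at $|\beta z|=z/\alpha$ and then recover the arguments $2\pi\beta z$ displayed in the statement via the parities of the Clausen functions ($\Cl_m$ is even for odd $m$ and odd for even $m$); the sign changes this introduces are precisely those already encoded in the odd-exponent relations $\alpha^{-(2j-1)}=-\beta^{2j-1}$ and $\beta^{-(2j-1)}=-\alpha^{2j-1}$ used above. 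Second, one could regard \eqref{eq.mag6ovt} as the analytic continuation in $z$ of the identity for $P(2n,-iz)$ coming from \eqref{main_id1}, work with principal branches, and observe that all imaginary contributions must cancel because the two target identities are manifestly real. Either route leaves only the lengthy but routine factorial-and-sign bookkeeping sketched above; no new analytic input is needed.
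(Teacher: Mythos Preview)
Your proposal is correct and follows exactly the approach the paper itself takes: the paper's proof reads in its entirety ``Both results follow immediately from \eqref{eq.mag6ovt}. We omit details,'' and you have simply supplied those details---specializing \eqref{eq.mag6ovt} to $w=\alpha z$ and $w=\beta z$ and combining via the Binet formulas. Your discussion of the negative-argument issue (handled either through the parity of the Clausen functions or by reading $\ln(2\pi z)$ as $\ln(2\pi|z|)$) is a valid clarification of a point the paper leaves implicit.
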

\begin{proof}
	Both results follow immediately from \eqref{eq.mag6ovt}. We omit details.
\end{proof}
\begin{theorem}
	If $n$ is a positive integer and $z$ is any real number such that  $0<|z|\leq{1}/{\alpha}$, then
	\begin{align*}
	\sqrt5	\sum_{k = 1}^\infty & \frac{F_{2k}\zeta(2k)z^{2k}}{k(2k-1+2n)} = \frac{1}{2n-1} \ln\left(-\alpha^2\,\frac{\sin(\pi\beta z)}{\sin(\pi\alpha z)}\right)\\
	&\qquad\qquad-(2n-2)!\sum_{j=1}^n (-1)^j\frac{(2n+1-2j)\Cl_{2j}(2\pi\alpha z) - 2\pi\alpha z \Cl_{2j-1}(2\pi\alpha z)}{(2n+1-2j)!(2\pi\alpha z)^{2j-1}}\\
	&\qquad\qquad+(2n-2)!\sum_{j=1}^n (-1)^j \frac{(2n+1-2j)\Cl_{2j}(2\pi\beta z) - 2\pi\beta z \Cl_{2j-1}(2\pi\beta z)}{(2n+1-2j)!(2\pi\beta z)^{2j-1}},
			\end{align*}
		\begin{align*}	
	\sum_{k = 1}^\infty & \frac{L_{2k}\zeta(2k)z^{2k}}{k(2k-1+2n)} = -\frac{2}{(2n-1)^2} + \frac{1}{2n-1} \ln\left(\frac{\pi^2z^2}{\sin(\pi\alpha z)\sin(\pi\beta z)}\right)\\
	&\qquad\qquad\quad\,\,-(2n-2)!\sum_{j=1}^n(-1)^j \frac{(2n+1-2j)\Cl_{2j}(2\pi\alpha z) - 2\pi\alpha z \Cl_{2j-1}(2\pi\alpha z)}{(2n+1-2j)!(2\pi\alpha z)^{2j-1}}\\
	&\qquad\qquad\quad\,\,-(2n-2)!\sum_{j=1}^n(-1)^j \frac{(2n+1-2j)\Cl_{2j}(2\pi\beta z) - 2\pi\beta z \Cl_{2j-1}(2\pi\beta z)}{(2n+1-2j)!(2\pi\beta z)^{2j-1}}.
	\end{align*}
\end{theorem}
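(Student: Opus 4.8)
The plan is to deduce both identities directly from formula \eqref{eq.no0fbhh}, using the same device that produced the earlier Fibonacci/Lucas theorems of this section: substitute the two Binet factors $\alpha z$ and $\beta z$ into the known evaluation and combine. By the Binet formulas \eqref{bine} we have $\sqrt5\,F_{2k}z^{2k}=(\alpha z)^{2k}-(\beta z)^{2k}$ and $L_{2k}z^{2k}=(\alpha z)^{2k}+(\beta z)^{2k}$, so, writing $\Sigma(w)=\sum_{k\ge1}\zeta(2k)w^{2k}/\bigl(k(2k-1+2n)\bigr)$,
\[
\sqrt5\sum_{k=1}^\infty\frac{F_{2k}\zeta(2k)z^{2k}}{k(2k-1+2n)}=\Sigma(\alpha z)-\Sigma(\beta z),\qquad \sum_{k=1}^\infty\frac{L_{2k}\zeta(2k)z^{2k}}{k(2k-1+2n)}=\Sigma(\alpha z)+\Sigma(\beta z).
\]
The hypothesis $0<|z|\le1/\alpha$ gives $|\alpha z|\le1$ and $|\beta z|\le1/\alpha^2<1$, so \eqref{eq.no0fbhh}, which holds for every real $w$ with $0<|w|\le1$, applies to both $\Sigma(\alpha z)$ and $\Sigma(\beta z)$.

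Next I would insert $w=\alpha z$ and $w=\beta z$ into the right-hand side of \eqref{eq.no0fbhh} and form the difference and the sum. The constant $-1/(2n-1)^2$ cancels in $\Sigma(\alpha z)-\Sigma(\beta z)$ and doubles to $-2/(2n-1)^2$ in $\Sigma(\alpha z)+\Sigma(\beta z)$, while the two Clausen-function sums carry over termwise, reproducing exactly the displayed combinations with their stated signs and $\alpha$/$\beta$ labels. The only place that needs a moment of care is the logarithmic term. For the difference,
\[
\frac{1}{2n-1}\Bigl(\ln\bigl(\pi\alpha z\csc(\pi\alpha z)\bigr)-\ln\bigl(\pi\beta z\csc(\pi\beta z)\bigr)\Bigr)=\frac{1}{2n-1}\ln\!\left(\frac{\alpha}{\beta}\cdot\frac{\sin(\pi\beta z)}{\sin(\pi\alpha z)}\right),
\]
and the elementary relations $\alpha\beta=-1$ and $\alpha/\beta=-\alpha^2$ (equivalently $\beta=-1/\alpha$) turn the argument into $-\alpha^2\sin(\pi\beta z)/\sin(\pi\alpha z)$, which is positive on the admissible range. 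For the sum one gets $\ln\bigl(\pi^2\alpha\beta z^2/(\sin(\pi\alpha z)\sin(\pi\beta z))\bigr)$, and $\alpha\beta=-1$ yields the stated $\ln\bigl(\pi^2z^2/(\sin(\pi\alpha z)\sin(\pi\beta z))\bigr)$, the argument being understood in absolute value since $\sin(\pi\alpha z)$ and $\sin(\pi\beta z)$ have opposite signs there.

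Since the statement is a corollary of \eqref{eq.no0fbhh}, there is no real obstacle; the work is bookkeeping — tracking the $(-1)^j$ factors and the $\alpha/\beta$ superscripts across the two Clausen sums — plus the branch issue for the logarithm when $\beta z<0$, which is handled by noting that \eqref{eq.no0fbhh} is even in its argument (so $\Sigma(\beta z)=\Sigma(|\beta z|)$) and that $\pi w\csc(\pi w)>0$ for $0<|w|<1$. Assembling the cancelled or doubled constant, the combined logarithm, and the combined Clausen sums then gives the two formulas precisely as stated, with the factor $\sqrt5$ kept on the left in the Fibonacci case.
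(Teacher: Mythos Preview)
Your proposal is correct and follows exactly the approach indicated in the paper, which simply states that both formulas follow immediately from \eqref{eq.no0fbhh}. Your write-up is in fact more careful than the paper's one-line proof: you make the Binet substitution explicit, verify that $|\alpha z|\le1$ and $|\beta z|<1$ so that \eqref{eq.no0fbhh} applies, and you handle the combination of the logarithmic terms (including the sign issue coming from $\alpha\beta=-1$) that the paper leaves to the reader.
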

\begin{proof}
	Both formulas follow immediately from \eqref{eq.no0fbhh}.
\end{proof}

\section{Two other interesting series}

Our analysis allows the evaluation of two other interesting series which are similar to the series considered by Orr \cite{orr}.
For positive integers $m$ and $n$ with $m\neq n$, let us consider the two series
\begin{equation*}
P(m,n,z) = \sum_{k = 1}^\infty \frac{(- 1)^k \zeta (2k) z^{2k}}{k(2k+m)(2k + n)}, \qquad 0<|z|\leq 1,
\end{equation*}
and its degenerated counterpart for $m\geq1$
\begin{equation*}
Q(m,z) = \sum_{k = 1}^\infty \frac{(- 1)^k \zeta (2k) z^{2k}}{k(2k+m)^2}, \qquad 0<z\leq 1,
\end{equation*}

Then, the following result holds true.
\begin{theorem}
	For non-equal positive integers $m$, $n$ and all $z$ such that $0<|z|\le 1$, we have
	\begin{align}
	\sum_{k = 1}^\infty & \frac{(- 1)^{k-1} \zeta (2k)z^{2k} }{k(2k+m)(2k + n)} = \frac{m+n}{(mn)^2} + \frac{{\pi z}}{(m+1)(n + 1)} - \frac{\ln(2\pi z)}{mn}\nonumber \\
	& \qquad\qquad\,\, + \frac{1}{m-n} \left ( \frac{(m - 1)!}{(2\pi z)^m } \zeta (m + 1) - \frac{(n - 1)!}{(2\pi z)^n } \zeta (n + 1) \right ) \nonumber\\
	& \qquad\qquad\,\, - \frac{1}{m-n}\left ((m - 1)!\sum_{j = 1}^{m} \frac{\Li_{j+1}\big( e^{- 2\pi z} \big)}{(m-j)!(2\pi z)^j} - (n - 1)!\sum_{j = 1}^{n} \frac{\Li_{j + 1} \big( e^{- 2\pi z} \big)}{(n-j)!(2\pi z)^j}\right)\label{main_mn_id1}
	\end{align}
	and, for all $0<z\leq 1$,
	\begin{align}
	\sum_{k = 1}^\infty \frac{(- 1)^{k-1} \zeta (2k) z^{2k}}{k(2k+m)^2} & = \frac{2}{m^3} + \frac{\pi z}{(m+1)^2} - \frac{\ln(2\pi z)}{m^2} \nonumber\\
	&\quad+ \frac{(m - 1)!}{(2\pi z)^m } \zeta (m + 1) H_{m-1} + \frac{m!}{m^2}\sum_{j = 1}^{m} \frac{\Li_{j+1}\left( e^{- 2\pi z}\right)}{(m-j)!(2\pi z)^j}\nonumber\\
	&\quad - \frac{(m-1)!}{(2\pi z)^m} \sum_{k = 1}^\infty \frac{\Ein(2\pi z k) + \sum_{j=1}^m  \frac{\Gamma(j,2\pi zk)}{j!}}{k^{m+1}},\label{main_msquared_id}
	\end{align}
	where $H_n=\sum\limits_{s=1}^n \frac{1}{s}$, $H_0=0$, are the harmonic numbers,
	$\Gamma(a,x)$ is the incomplete gamma function
	$
	\Gamma(a,x) = \int\limits_x^\infty t^{a-1}\,e^{-t}\,dt,
	$
	and
	$\Ein(x)$ being the exponential integral
	$
	\Ein(x) = \int\limits_0^x \frac{1-e^{-t}}{t}\,dt.
	$
\end{theorem}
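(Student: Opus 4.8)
The two identities follow from Theorem~\ref{thm1}, but by different routes: the first is purely algebraic, while the second requires one new integral evaluation. For \eqref{main_mn_id1} the plan is to strip off the extra linear factor by partial fractions. From
\[
\frac{1}{k(2k+m)(2k+n)}=\frac{1}{m-n}\Big(\frac{1}{k(2k+n)}-\frac{1}{k(2k+m)}\Big)
\]
one obtains $\sum_{k\ge1}\frac{(-1)^{k-1}\zeta(2k)z^{2k}}{k(2k+m)(2k+n)}=\frac{1}{m-n}\big(P(m,z)-P(n,z)\big)$. Substituting the closed form \eqref{main_id1} for both $P(m,z)$ and $P(n,z)$ and collecting terms, the rational contributions combine through $\frac{1}{n^{2}}-\frac{1}{m^{2}}=\frac{(m-n)(m+n)}{(mn)^{2}}$, $\frac{1}{m+1}-\frac{1}{n+1}=\frac{n-m}{(m+1)(n+1)}$ and $\frac{1}{m}-\frac{1}{n}=\frac{n-m}{mn}$, reproducing exactly the first three terms on the right of \eqref{main_mn_id1}; the $\zeta$- and polylogarithm blocks are carried over unchanged, divided by $m-n$. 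This is routine.

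For \eqref{main_msquared_id} I would rerun the first proof of Theorem~\ref{thm1} with $1/(2k+m)^{2}$ replacing $1/(2k+n)$. Using $\frac{1}{(2k+m)^{2}}=-\int_0^1 y^{2k+m-1}\ln y\,dy$ and the expansion of $S_1$ from \eqref{sum_s1}--\eqref{eq.hyjpkxw},
\[
Q(m,z)=-\int_0^1 y^{m-1}(\ln y)\Big(-\pi zy+\ln(2\pi z)+\ln y-\ln(1-e^{-2\pi zy})\Big)\,dy .
\]
The first three pieces are elementary through $\int_0^1 y^{a-1}\ln y\,dy=-a^{-2}$ and $\int_0^1 y^{a-1}(\ln y)^{2}\,dy=2a^{-3}$, and contribute $-\frac{\pi z}{(m+1)^{2}}+\frac{\ln(2\pi z)}{m^{2}}-\frac{2}{m^{3}}$. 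Expanding $-\ln(1-e^{-2\pi zy})=\sum_{k\ge1}e^{-2\pi zyk}/k$ and interchanging sum and integral (legitimate by uniform convergence, as $0<z\le1$) reduces the last piece to $\sum_{k\ge1}\frac1k\int_0^1 y^{m-1}(\ln y)e^{-2\pi zky}\,dy$.

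The heart of the argument is the closed evaluation, for $\mu>0$ and a positive integer $m$, of $\int_0^1 y^{m-1}(\ln y)e^{-\mu y}\,dy$. The plan is: substitute $t=\mu y$ to rewrite it as $\mu^{-m}\big(\int_0^\mu t^{m-1}(\ln t)e^{-t}\,dt-(\ln\mu)\,\gamma(m,\mu)\big)$, where $\gamma(a,x)=\int_0^x t^{a-1}e^{-t}\,dt=\Gamma(a)-\Gamma(a,x)$ is the lower incomplete gamma function; integrate $\int_0^\mu t^{m-1}(\ln t)e^{-t}\,dt$ by parts using the antiderivative $-\Gamma(m,t)$ of $t^{m-1}e^{-t}$, cancelling the logarithmically divergent endpoint contribution at $t=0$ against the divergence of $\int_0^\mu\Gamma(m,t)/t\,dt$, which leaves $\gamma(m,\mu)\ln\mu-\int_0^\mu\gamma(m,t)/t\,dt$; and finally invoke the integer-order identity $\gamma(m,t)=(m-1)!\big(1-e^{-t}\sum_{j=0}^{m-1}t^{j}/j!\big)$ to get $\int_0^\mu\gamma(m,t)/t\,dt=(m-1)!\big(\Ein(\mu)-\sum_{j=1}^{m-1}\gamma(j,\mu)/j!\big)$ and then substitute $\gamma(j,\mu)/j!=1/j-\Gamma(j,\mu)/j!$. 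The outcome is
\[
\int_0^1 y^{m-1}(\ln y)e^{-\mu y}\,dy=-\frac{(m-1)!}{\mu^{m}}\Big(\Ein(\mu)-H_{m-1}+\sum_{j=1}^{m-1}\frac{\Gamma(j,\mu)}{j!}\Big).
\]

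With $\mu=2\pi zk$, dividing by $k$ and summing, the $H_{m-1}$-part gives $\frac{(m-1)!H_{m-1}}{(2\pi z)^{m}}\zeta(m+1)$ and the remainder gives $-\frac{(m-1)!}{(2\pi z)^{m}}\sum_{k\ge1}k^{-(m+1)}\big(\Ein(2\pi zk)+\sum_{j=1}^{m-1}\Gamma(j,2\pi zk)/j!\big)$; combining with the three elementary terms yields $-Q(m,z)$, i.e.\ \eqref{main_msquared_id} in the form with the inner $j$-sum running only to $m-1$. To reach the displayed form one extends that sum to $j=m$: using $\Gamma(m,x)=(m-1)!e^{-x}\sum_{j=0}^{m-1}x^{j}/j!$, the additional $j=m$ contribution sums in closed form to $\frac{1}{m(2\pi z)^{m}}\sum_{k\ge1}k^{-(m+1)}\Gamma(m,2\pi zk)=\frac{m!}{m^{2}}\sum_{j=1}^{m}\frac{\Li_{j+1}(e^{-2\pi z})}{(m-j)!(2\pi z)^{j}}$ after reindexing $j\mapsto m-j$, which is exactly the polylogarithm term of \eqref{main_msquared_id}. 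The main obstacle is the displayed integral evaluation, and in particular the careful regularization of the divergent boundary term in the integration by parts: this is what makes $\Ein$ appear and what fixes the coefficient $H_{m-1}$. Everything after that is bookkeeping.
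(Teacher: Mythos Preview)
Your argument is correct for both parts, but in each case you take a different route from the paper.

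For \eqref{main_mn_id1} the paper uses the three-term partial fraction
\[
\frac{1}{k(2k+m)(2k+n)}=\frac{1}{mnk}+\frac{2}{m(m-n)(2k+m)}-\frac{2}{n(m-n)(2k+n)},
\]
writing $P(m,n,z)$ in terms of $S_1(z)$ and the sums $S_{2,m}(z),S_{2,n}(z)$ evaluated in Section~2. Your two-term decomposition into $P(m,z)$ and $P(n,z)$ is cleaner because it treats Theorem~\ref{thm1} as a black box; the paper's route has the advantage of isolating the building blocks $S_1,S_{2,v}$ that are reused elsewhere.

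For \eqref{main_msquared_id} the difference is more substantial. The paper again splits by partial fractions, so that the only new object is $S_{3,m}(z)=\sum_{k\ge1}\frac{(-1)^k\zeta(2k)z^{2k}}{(2k+m)^2}$, which it evaluates as $\int_0^1 y^{m-1}S_{2,m}(zy)\,dy$ using the already-known closed form of $S_{2,m}$; the $\Ein$ term appears through $\int_0^1 y^{-1}(\Li_{m+1}(e^{-2\pi zy})-\zeta(m+1))\,dy$, and the incomplete gammas through $\int_0^1 y^{j-1}e^{-2\pi zky}\,dy$. You instead start one level lower, writing $Q(m,z)=-\int_0^1 y^{m-1}(\ln y)\,S_1(zy)\,dy$ directly and deriving the closed form
\[
\int_0^1 y^{m-1}(\ln y)\,e^{-\mu y}\,dy=-\frac{(m-1)!}{\mu^{m}}\Big(\Ein(\mu)-H_{m-1}+\sum_{j=1}^{m-1}\frac{\Gamma(j,\mu)}{j!}\Big),
\]
whose regularization you handle correctly. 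Your route packages the whole computation into a single integral identity, and your observation that the paper's displayed form (with the inner $j$-sum up to $m$ and the extra polylogarithm block) is just a rewriting of the natural $j\le m-1$ form is a nice coda. One small slip: the ``last piece'' of $Q(m,z)$ is $-\sum_{k\ge1}\frac1k\int_0^1 y^{m-1}(\ln y)e^{-2\pi zky}\,dy$, not $+\sum$; your stated contributions to $-Q(m,z)$ are nevertheless correct, so the sign must have been tracked silently.
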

\begin{proof}
	From the partial fraction decomposition
	\begin{equation*}
	\frac{1}{k(2k+m)(2k+n)} = \frac{2}{m(m-n)(2k+m)} - \frac{2}{n(m-n)(2k+n)} + \frac{1}{mnk}
	\end{equation*}
	we immediately see that we can write
	\begin{equation*}
	P(m,n,z) = \frac{1}{mn} S_1(z) + \frac{2}{m-n}\left ( \frac{1}{m} S_{2,m}(z) - \frac{1}{n} S_{2,n}(z) \right )\!,
	\end{equation*}
	where $S_1(z)$ is defined in \eqref{sum_s1} and $S_{2,v}(z)$ equals
	\begin{equation*}
	S_{2,v}(z) = \sum_{k=1}^\infty \frac{(-1)^k \zeta(2k) z^{2k}}{2k+v},
	\end{equation*}
	which was also evaluated in Section 2. To complete the proof of \eqref{main_mn_id1} we simplify making use of the elementary identity
	\begin{equation*}
	\frac{n(n+1)-m(m+1)}{(m+1)(n+1)(m-n)} + 1 = \frac{mn}{(m+1)(n+1)}.
	\end{equation*}

	For \eqref{main_msquared_id} we start with the partial fraction decomposition
	\begin{equation*}
	\frac{1}{k(2k+m)^2} = \frac{1}{m^2 k} - \frac{2}{m^2(2k+m)} - \frac{2}{m(2k+m)^2}.
	\end{equation*}
	This yields
	\begin{equation}\label{G_mz}
	Q(m,z) = \frac{1}{m^2} S_1(z) - \frac{2}{m^2} S_{2,m}(z) - \frac{2}{m} S_{3,m}(z)
	\end{equation}
	with $S_1(z)$ and $S_{2,m}(z)$ as above and
	\begin{equation}\label{S3}
	S_{3,m}(z) = \sum_{k=1}^\infty \frac{(-1)^k \zeta(2k) z^{2k}}{(2k+m)^2}.
	\end{equation}
	We have
	\begin{align*}
	S_{3,m}(z) & =  \int_0^1 y^{m-1} S_{2,m}(zy)\,dy \\
	& =  \frac{1}{2m^2} - \frac{\pi z}{2(m+1)^2} - \frac{m!}{2^{m+1}(\pi z)^m} \zeta(m+1) \int_0^1 \frac{dy}{y}   \\
	&  \quad + \pi z \sum_{j=0}^m \frac{m!}{j!} \frac{1}{(2\pi z)^{m+1-j}} \int_0^1 y^{j-1} \Li_{m+1-j}(e^{-2\pi zy})\, dy
	\end{align*}
	\begin{align*}
	& =  \frac{1}{2m^2} - \frac{\pi z}{2(m+1)^2} + \frac{m!}{2^{m+1}(\pi z)^m}
	\int_0^1 \frac{ \Li_{m+1}(e^{-2\pi zy}) - \zeta(m+1)}{y}  dy  \\
	&  \quad + \frac{\pi z}{(2\pi z)^{m+1}} \sum_{j=1}^m \frac{m!}{j!} (2\pi z)^{j} \int_0^1 y^{j-1} \Li_{m+1-j}(e^{-2\pi zy})\, dy.
	\end{align*}

	Next,
	\begin{align*}
	\int_0^1 \frac{1}{y} \left ( \Li_{m+1}(e^{-2\pi zy}) - \zeta(m+1)\right ) dy
	& =  \sum_{k=1}^\infty \frac{1}{k^{m+1}} \int_0^1 \frac{e^{-2\pi zyk} - 1 }{y} \, dy = - \sum_{k=1}^\infty \frac{\Ein(2\pi zk)}{k^{m+1}}.
	\end{align*}
	Also,
	\begin{align*}
	\int_0^1 y^{j-1} \Li_{m+1-j}(e^{-2\pi zy}) dy & =  \sum_{k=1}^\infty \frac{1}{k^{m+1-j}} \int_0^1 y^{j-1} e^{-2\pi zyk} dy \\
	& =  \sum_{k=1}^\infty \frac{1}{k^{m+1-j}} \frac{(j-1)! - \Gamma(j,2\pi zk)}{(2\pi zk)^j},
	\end{align*}
	as
	\begin{equation*}
	\int_0^1 x^{j-1} e^{-2\pi ax}\, dx = \frac{(j-1)! - \Gamma(j,2\pi a)}{(2\pi a)^j}, \qquad \Re (a)>0.
	\end{equation*}

	The expression for $S_{3,m}(z)$ becomes
	\begin{align*} 
	S_{3,m}(z) & =  \frac{1}{2m^2} - \frac{\pi z}{2(m+1)^2} + \frac{m!}{2^{m+1}(\pi z)^m}
	\left ( \sum_{k=1}^\infty  \frac{\gamma - \Ein(2\pi z k)}{k^{m+1}} - \gamma \zeta(m+1) \right ) \\
	&  \quad + \frac{\pi z}{(2\pi z)^{m+1}} \sum_{j=1}^m \frac{m!}{j!} (2\pi z)^{j} \sum_{k=1}^\infty \frac{1}{k^{m+1-j}} \frac{(j-1)! - \Gamma(j,2\pi zk)}{(2\pi zk)^j}\\
	& =  \frac{1}{2m^2} - \frac{\pi z}{2(m+1)^2} + \frac{m!}{2^{m+1}(\pi z)^m} \zeta(m+1) (H_m - \gamma) \\
	&  \quad + \frac{m!}{2^{m+1}(\pi z)^m} \sum_{k=1}^\infty  \frac{\gamma - \Ein(2\pi z k)}{k^{m+1}}
	- \frac{m!}{2^{m+1}(\pi z)^m} \sum_{j=1}^m \frac{1}{j!} \sum_{k=1}^\infty \frac{\Gamma(j,2\pi zk)}{k^{m+1}},
	\end{align*}
where $\gamma=\lim\limits_{n\to\infty}(H_n-\ln n)$ is the Euler--Mascheroni constant.

	The expression for $Q(m,z)$ follows from simplifications according to \eqref{G_mz}.
\end{proof}
\begin{example} Series identity \eqref{main_mn_id1} yields
	\begin{align*}
	\sum_{k = 1}^\infty \frac{(- 1)^{k-1} \zeta (2k)}{k(2k+1)(2k+2)}  &= \frac{3}{4} +\frac{\pi}{12} - \frac{\ln(2\pi)}{2} + \frac{\zeta(3)}{(2\pi)^2}-\frac{\Li_3(e^{-2\pi})}{(2\pi)^2},\\
	\sum_{k = 1}^\infty \frac{(- 1)^{k-1} \zeta (2k)}{4^k k(2k+1)(2k+2)} & = \frac{3}{4} -\frac{\pi}{12} - \frac{\ln\pi}{2} + \frac{\zeta(3)}{\pi^2}-\frac{\Li_3(e^{-\pi})}{\pi^2},\\
	\sum_{k = 1}^\infty \frac{(- 1)^{k-1} \zeta (2k)}{4^k k(2k+1)(2k+3)}  &= \frac{4}{9}-\frac{7\pi}{720} -\frac{\ln\pi}{3}  -\frac{\Li_3(e^{-\pi})}{\pi^2}-\frac{\Li_4(e^{-\pi})}{\pi^3}.
	\end{align*}
\end{example}
\begin{corollary}
	If $m$ and $n$ are non-equal positive integers having the same parity, then
	\begin{align*}
	& \sum_{k = 1}^\infty  {\frac{{\zeta (2k)}}{{k(2k + m)(2k + n)}}}  =  - \frac{{m + n}}{{(mn)^2 }} + \frac{{\ln (2\pi )}}{{mn}}\\
	&\qquad\qquad\quad - \frac{1 + ( - 1)^n}{{2(m - n)}}\left( {i^{m} (m - 1)!\frac{{\zeta (m + 1)}}{{(2\pi )^m }} - i^{n} (n - 1)!\frac{{\zeta (n + 1)}}{{(2\pi )^n }}} \right)\\
	&\qquad\qquad\quad + \frac{1}{{m - n}}\left( {(m - 1)!\sum_{j = 1}^{\left\lfloor {m/2} \right\rfloor } {\frac{{( - 1)^j \zeta (2j + 1)}}{{(m - 2j)!(2\pi )^{2j} }}}  - (n - 1)!\sum_{j = 1}^{\left\lfloor {n/2} \right\rfloor } {\frac{{( - 1)^j \zeta (2j + 1)}}{{(n - 2j)!(2\pi )^{2j} }}} } \right)\!.
	\end{align*}
\end{corollary}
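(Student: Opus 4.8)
The plan is to specialise the identity \eqref{main_mn_id1} at $z=i$ and then equate real parts. Since $i^{2k}=(-1)^k$, the left-hand side of \eqref{main_mn_id1} at $z=i$ becomes $\sum_{k\ge 1}(-1)^{k-1}(-1)^k\zeta(2k)/\big(k(2k+m)(2k+n)\big)=-\sum_{k\ge 1}\zeta(2k)/\big(k(2k+m)(2k+n)\big)$, i.e.\ minus the series we wish to evaluate (which converges absolutely, as $\zeta(2k)\to 1$ and the denominator is of order $k^{3}$, and is thus a real number). On the right-hand side we substitute $e^{-2\pi i}=1$, so that $\Li_{j+1}(e^{-2\pi i})=\zeta(j+1)$ for every $j\ge 1$ (all polylogarithms occurring have order $\ge 2$, so the value on the unit circle is harmless), together with $\ln(2\pi i)=\ln(2\pi)+i\pi/2$ and $(2\pi i)^{-j}=(2\pi)^{-j}\,i^{-j}$.

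Next I would take real parts. The term $\pi i/\big((m+1)(n+1)\big)$ and the piece $-i\pi/(2mn)$ coming from the logarithm are purely imaginary and vanish, so the right-hand side contributes $(m+n)/(mn)^2-\ln(2\pi)/(mn)$ from its elementary terms. For the polylogarithmic sums one uses $\Re(i^{-j})=0$ when $j$ is odd and $\Re(i^{-j})=(-1)^{j/2}$ when $j$ is even; re-indexing $j=2l$ collapses each sum $\sum_{j=1}^{m}$ to $\sum_{l=1}^{\lfloor m/2\rfloor}$ with sign $(-1)^{l}$, which reproduces the last line of the corollary. The $\zeta(m+1)$ and $\zeta(n+1)$ terms survive the real part precisely when $m$ and $n$ are even, where moreover $i^{-m}=i^{m}$; since $m$ and $n$ have the same parity, the single prefactor $\tfrac12\big(1+(-1)^{n}\big)$ encodes this for both and yields the second line. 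Multiplying the resulting real expression by $-1$ (to undo the sign noted above) gives the stated identity.

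The calculation is entirely routine; the one point to watch is the role of the equal-parity hypothesis. It is exactly what permits the common factor $\tfrac12(1+(-1)^{n})$ to govern both the $\zeta(m+1)$ and the $\zeta(n+1)$ contributions simultaneously, so that the right-hand side consolidates into the symmetric difference displayed. Without it, the two zeta terms would behave differently under $\Re$ (one surviving, one vanishing), and the closed form would take a less tidy shape; a parallel evaluation of $P(m,i)$ and $P(n,i)$ via the partial fraction $\tfrac{1}{(2k+m)(2k+n)}=\tfrac{1}{n-m}\big(\tfrac{1}{2k+m}-\tfrac{1}{2k+n}\big)$ together with Corollary~\ref{Cor4_1}--\eqref{Cor4_2} offers an alternative but case-splitting route leading to the same result.
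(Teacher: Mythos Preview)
Your proof is correct and is exactly the approach taken in the paper, which simply says ``Evaluate \eqref{main_mn_id1} at $z=i$.'' You have merely spelled out the details: the substitution $z=i$ turns the left side into the negative of the desired series, $e^{-2\pi i}=1$ converts every $\Li_{j+1}$ to $\zeta(j+1)$, and taking real parts (using $\Re(i^{-j})=0$ for odd $j$, $\Re(i^{-j})=(-1)^{j/2}$ for even $j$) collapses the sums and produces the $(1+(-1)^n)/2$ factor on the isolated zeta terms, with the equal-parity hypothesis ensuring this factor governs both simultaneously.
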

\begin{proof}
	Evaluate \eqref{main_mn_id1} at $z=i$.
\end{proof}
\begin{example} We have
	\begin{align*}
	\sum_{k = 1}^\infty {\frac{{\zeta (2k)}}{{k(2k + 1)(2k + 3)}}} &= - \frac{4}{9} + \frac{1}{3}\ln (2\pi ) - \frac{{\zeta (3)}}{{4\pi ^2 }},\\
	\sum_{k = 1}^\infty {\frac{{\zeta (2k)}}{{k(k + 1)(k + 2)}}} &= - \frac{3}{8} + \frac{1}{2}\ln (2\pi ) - \frac{{3\zeta (3)}}{{2\pi ^2 }},\\
	\sum_{k = 1}^\infty {\frac{{\zeta (2k)}}{{k(2k + 3)(2k + 5)}}} & = - \frac{8}{{225}} + \frac{1}{{15}}\ln (2\pi )
	- \frac{{\zeta (3)}}{{4\pi ^2 }} + \frac{{3\zeta (5)}}{{4\pi ^4 }}.
	\end{align*}
\end{example}
\begin{corollary}
	If $m$ is a positive even integer and $n$ is a positive odd integer, then
	\begin{align*}
	&\sum_{k = 1}^\infty  {\frac{{\zeta (2k)}}{{k(2k + m)(2k + n)}}}  =  - \frac{{m + n}}{{(mn)^2 }} + \frac{{\ln (2\pi )}}{{mn}} -  \frac{i^m {(m - 1)!}}{{m - n}}\frac{{\zeta (m + 1)}}{{(2\pi )^m }}\\
	&\qquad\quad + \frac{1}{{m - n}}\left( {(m - 1)!\sum_{j = 1}^{m/2} {\frac{{( - 1)^j \zeta (2j + 1)}}{{(m - 2j)!(2\pi )^{2j} }}}  - (n - 1)!\sum_{j = 1}^{(n - 1)/2} {\frac{{( - 1)^j \zeta (2j + 1)}}{{(n - 2j)!(2\pi )^{2j} }}} } \right)\!.
	\end{align*}
\end{corollary}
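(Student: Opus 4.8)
The plan is to specialize the master identity \eqref{main_mn_id1} to $z=i$ and take real parts. Since $i^{2k}=(-1)^k$, at $z=i$ we have $(-1)^{k-1}z^{2k}=(-1)^{k-1}(-1)^k=-1$, so the left side of \eqref{main_mn_id1} collapses to $-\sum_{k\ge1}\zeta(2k)/\big(k(2k+m)(2k+n)\big)$, a real number; hence the claimed identity will follow by evaluating the right side of \eqref{main_mn_id1} at $z=i$, keeping only its real part, and flipping the overall sign.

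The evaluation at $z=i$ proceeds term by term, using $\ln(2\pi i)=\ln(2\pi)+i\pi/2$, $(2\pi i)^j=(2\pi)^j\,i^j$, and $\Li_{j+1}(e^{-2\pi i})=\Li_{j+1}(1)=\zeta(j+1)$; the last is legitimate since $e^{-2\pi i}=1$ exactly and every polylogarithm occurring has order $j+1\ge2$ (this is precisely why the sums in \eqref{main_mn_id1} begin at $j=1$). The term $\pi z/\big((m+1)(n+1)\big)$ becomes purely imaginary and drops out, while $-\ln(2\pi i)/(mn)$ contributes $-\ln(2\pi)/(mn)$ to the real part. In the correction term $\tfrac{1}{m-n}\big(\tfrac{(m-1)!}{(2\pi i)^m}\zeta(m+1)-\tfrac{(n-1)!}{(2\pi i)^n}\zeta(n+1)\big)$ the hypotheses enter: $m$ even makes $i^{-m}=(-1)^{m/2}$ real, so this piece survives as $\tfrac{i^m(m-1)!\zeta(m+1)}{(m-n)(2\pi)^m}$, whereas $n$ odd makes $i^{-n}\in\{\pm i\}$, so the $\zeta(n+1)$ piece is purely imaginary and is discarded.

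Finally I would split each polylogarithm sum $\sum_{j=1}^{m}\tfrac{\zeta(j+1)}{(m-j)!(2\pi)^j\,i^j}$, and the analogous $n$-sum, according to the parity of $j$: for even $j=2\ell$ the factor $i^{-2\ell}=(-1)^\ell$ is real and contributes $\tfrac{(-1)^\ell\zeta(2\ell+1)}{(m-2\ell)!(2\pi)^{2\ell}}$, with $\ell$ running up to $m/2$ (respectively up to $(n-1)/2$ for the $n$-sum, the largest even index $j\le n$ being $n-1$ because $n$ is odd), while for odd $j$ the factor $i^{-j}\in\{\pm i\}$ is imaginary and is dropped. Collecting the surviving real contributions, equating them with the real left side, and negating yields exactly the stated formula, written in a shape parallel to \eqref{main_mn_id1}. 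No genuine obstacle arises: the work is entirely the bookkeeping of which powers of $i$ are real versus imaginary and the correct truncation of the index ranges forced by the parities of $m$ and $n$. (Equating imaginary parts instead would give a companion identity, which is not needed here.)
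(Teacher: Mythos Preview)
Your proposal is correct and follows precisely the approach the paper intends: the paper gives no separate proof for this corollary, but it is the obvious companion of the preceding one, whose one-line proof is ``Evaluate \eqref{main_mn_id1} at $z=i$.'' Your careful bookkeeping of which powers of $i$ are real versus imaginary, and of the resulting index ranges forced by the parities of $m$ and $n$, is exactly what that evaluation entails.
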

\begin{example} We have
	\begin{align*}
	\sum_{k = 1}^\infty {\frac{{\zeta (2k)}}{{k(k + 2)(2k + 3)}}} & = - \frac{7}{{72}} + \frac{\ln (2\pi ) }{6}- \frac{{\zeta (3)}}{{2\pi ^2 }},\\
	\sum_{k = 1}^\infty {\frac{{\zeta (2k)}}{{k(k + 1)(2k + 5)}}} & =  - \frac{7}{{50}} + \frac{\ln (2\pi )}{5} - \frac{{2\zeta (3)}}{{3\pi ^2 }} + \frac{{\zeta (5)}}{{\pi ^4 }}.
	\end{align*}
\end{example}

\section{Concluding remarks}

We conclude with the following observations. Another natural generalization of the function $P(n,z)$ is one of the form
\begin{equation}\label{gen_P1}
P(n,z,p) = \sum_{k = 1}^\infty \frac{(- 1)^k \zeta (2k) z^{2k}}{k^p (2k + n)}, \qquad 0<|z|\leq 1,\,\, p\geq 1.
\end{equation}
The recursion
\begin{equation*}
\frac{1}{k^p (2k+n)} = \frac{1}{n k^p} - \frac{2}{n} \frac{1}{k^{p-1}(2k+n)}
\end{equation*}
can be solved by standard methods to get
\begin{equation*}
\frac{1}{k^p (2k+n)} = \sum_{j=0}^{p-1} \frac{(-2)^j}{n^{j+1} k^{p-j}} + \Big ( \frac{2}{n}\Big )^p \frac{(-1)^p}{2k+n}.
\end{equation*}
Then
\begin{equation}
P(n,z,p) = \sum_{j=0}^{p-1} \frac{(-2)^j }{n^{j+1}}\sum_{k = 1}^\infty \frac{(- 1)^k \zeta (2k) z^{2k}}{k^{p-j}}
+ \left (- \frac{2}{n}\right )^p S_{2,n}(z).
\end{equation}
Hence, to extend Theorem \ref{thm1} to $P(n,z,p)$ it suffices to find a closed form for
\begin{equation}\label{dilog_series}
\sum_{k=1}^\infty \frac{(-1)^k \zeta(2k) z^{2k}}{k^p} = \sum_{t=1}^\infty \Li_p\left (- \left (\frac{z}{t}\right )^2 \right), \quad p\geq 2,
\end{equation}
as
\begin{equation*}
\sum_{t=1}^\infty \Li_1\left (- \left (\frac{z}{t}\right )^2 \right) = - \ln\left (\frac{\sinh(\pi z)}{\pi z}\right ) = S_1 (z).
\end{equation*}

Similarly, from the partial fraction decomposition
\begin{equation*}
\frac{1}{k (2k+m)(2k+n)} = \frac{1}{m n k} + \frac{2}{m (m-n)(2k+m)} - \frac{2}{n (m-n)(2k+n)},
\end{equation*}
we also get
\begin{align*}
\frac{1}{k^p (2k+m)(2k+n)} & = \frac{1}{m n k^p} + \frac{2}{m (m-n)}\left ( \sum_{j=0}^{p-2} \frac{(-2)^j }{m^{j+1} k^{p-1-j}}
+ \Big ( \frac{2}{m}\Big )^{p-1} \frac{(-1)^{p-1}}{2k+m}\right ) \\
& \quad - \frac{2}{n (m-n)}\left ( \sum_{j=0}^{p-2} \frac{(-2)^j }{n^{j+1} k^{p-1-j}}
+ \Big ( \frac{2}{n}\Big )^{p-1} \frac{(-1)^{p-1}}{2k+n}\right ).
\end{align*}
This shows that a closed form evaluation of \eqref{dilog_series} would also allow us to evaluate the generalization of $P(m,n,z)$ to
\begin{equation*}\label{gen_P2}
P(m,n,z,p) = \sum_{k = 1}^\infty \frac{(- 1)^k \zeta (2k) z^{2k}}{k^p (2k+m)(2k + n)}, \qquad 0<|z|\leq 1, \,\,\, m\neq n, \,\,\,p\geq 1,
\end{equation*}
via
\begin{align*}
P(m,n,z,p) &= \frac{1}{m n}\sum_{k = 1}^\infty \frac{(- 1)^k \zeta (2k) z^{2k}}{k^{p}} \\
& \quad + \frac{2}{m (m-n)}\left ( \sum_{j=0}^{p-2} \frac{(-2)^j }{m^{j+1}}\sum_{k = 1}^\infty \frac{(- 1)^k \zeta (2k) z^{2k}}{k^{p-1-j}}
+ \left (- \frac{2}{m}\right )^{p-1} S_{2,m}(z)\right ) \\
& \quad - \frac{2}{n (m-n)}\left ( \sum_{j=0}^{p-2} \frac{(-2)^j }{n^{j+1}}\sum_{k = 1}^\infty \frac{(- 1)^k \zeta (2k) z^{2k}}{k^{p-1-j}}
+ \left (- \frac{2}{n}\right )^{p-1} S_{2,n}(z)\right ).
\end{align*}

Finally, considering the generalization of $Q(m,z)$ to $Q(m,z,p)$ defined by
\begin{equation*}\label{gen_Q}
Q(m,z,p) = \sum_{k = 1}^\infty \frac{(- 1)^k \zeta (2k) z^{2k}}{k^p (2k + m)^2}, \qquad 0<|z|\leq 1, \,\,p\geq 1,
\end{equation*}
we start with the partial fraction decomposition
\begin{equation*}
\frac{1}{k (2k+m)^2} = \frac{1}{m^2 k} - \frac{2}{m^2 (2k+m)} - \frac{2}{m (2k+m)^2}.
\end{equation*}
The recursion is solved as
\begin{equation*}
\frac{1}{k^p (2k+m)^2} = \sum_{j=0}^{p-1} \frac{(-2)^j }{m^{j+2} k^{p-j}}
- 2 \sum_{j=0}^{p-1} \frac{(-2)^j }{m^{j+2} k^{p-1-j} (2k+m)} + \left ( \frac{2}{m}\right )^p \frac{(-1)^p}{(2k+m)^2}
\end{equation*}
giving the identity
\begin{align*}
Q(m,z,p) & = \sum_{j=0}^{p-1} \frac{(-2)^j}{m^{j+2}} \sum_{k = 1}^\infty \frac{(- 1)^k \zeta (2k) z^{2k}}{k^{p-j}} \\
& \quad - 2 \sum_{j=0}^{p-1} \frac{(-2)^j }{m^{j+2}} P(m,z,p-1-j) + \left (- \frac{2}{m}\right )^{p} S_{3,m}(z),
\end{align*}
where $P(n,z,p)$ is defined in \eqref{gen_P1} and $S_{3,m}(z)$ is defined in \eqref{S3}.

\end{document}